\documentclass[11pt]{article}
\usepackage{amssymb,amsmath,latexsym, verbatim, enumerate}
\usepackage{authblk}
\usepackage[hyperfootnotes=false]{hyperref}
\usepackage[mathscr]{eucal}
\usepackage{mathrsfs}
\def\O{\mathcal{O}}

\usepackage{tikz}
\usetikzlibrary{decorations.fractals}
\usepackage{pgfplots}
\usetikzlibrary{lindenmayersystems}

\allowdisplaybreaks

\begin{document}

\begin{doublespace}

\newtheorem{thm}{Theorem}[section]
\newtheorem{lemma}[thm]{Lemma}
\newtheorem{cond}[thm]{Condition} 
\newtheorem{defn}[thm]{Definition}
\newtheorem{prop}[thm]{Proposition}
\newtheorem{corollary}[thm]{Corollary}
\newtheorem{remark}[thm]{Remark}
\newtheorem{example}[thm]{Example}
\newtheorem{conj}[thm]{Conjecture}
\numberwithin{equation}{section}
\def\ee{\varepsilon}
\def\qed{{\hfill $\Box$ \bigskip}}

\def\b{\mathfrak{b}}
\def\eps{\varepsilon}

\def\j{{\boldsymbol j}}

\def\NN{{\cal N}}
\def\AA{{\cal A}}
\def\MM{{\cal M}}
\def\BB{{\cal B}}
\def\CC{{\cal C}}
\def\LL{{\cal L}}
\def\DD{{\cal D}}
\def\FF{{\cal F}}
\def\EE{{\cal E}}
\def\QQ{{\cal Q}}
\def\RR{{\mathbb R}}
\def\R{{\mathbb R}}
\def\L{{\bf L}}
\def\K{{\bf K}}
\def\S{{\bf S}}
\def\A{{\bf A}}
\def\E{{\mathbb E}}
\def\F{{\bf F}}
\def\P{{\mathbb P}}
\def\N{{\mathbb N}}
\def\eps{\varepsilon}
\def\wh{\widehat}
\def\wt{\widetilde}
\def\pf{\noindent{\bf Proof.} }
\def\beq{\begin{equation}}
\def\eeq{\end{equation}}
\def\lam{\lambda}
\def\H{\mathcal{H}}
\def\nn{\nonumber}
\def\L{\mathcal{L}}
\def\eqd{\,{\buildrel d \over =}\,}
\newcommand{\ud}{\mathrm{d}}

\newcommand{\hp}[1]{\textcolor{red}{#1}}

\newcommand\blfootnote[1]{%
  \begingroup
  \renewcommand\thefootnote{}\footnote{#1}%
  \addtocounter{footnote}{-1}%
  \endgroup
}

\title{\Large \bf Small-time heat decay for stable processes on fractal drums}
\author{Hyunchul Park and Yimin Xiao}

\date{\today}
\maketitle

\blfootnote{2020 Mathematics Subject Classification: 60G52, 28A80} 
\blfootnote{Key words: spectral heat content, stable processes, fractal drum, the renewal theorem}

\begin{abstract}
In this paper, we study the spectral heat content for isotropic 
stable processes on fractal drums (namely, open sets with 
fractal boundaries). The spectral heat content for subordinate 
killed Brownian motions by stable subordinators was investigated in 
\cite{PX23}, and the present work serves as a natural extension of 
\cite{PX23} for the spectral heat content for stable processes. Under suitable geometric conditions on the underlying domains, we show that the decay rate of the spectral heat content for stable processes differs substantially from that for subordinate killed Brownian motions when $\alpha=d-\b$, where $\b$ is the interior Minkowski dimension of the boundary of the underlying open set. 
\end{abstract}

\section{Introduction}\label{Introduction}
The spectral heat content (SHC) for a given L\'evy process in a domain $D$ 
measures the total heat remaining in $D$ when the heat particles move according 
to the process with the initial temperature set at one inside $D$ and 
maintained at zero outside $D$. 
The significance of SHC lies in the fact that it encodes both geometric information about the domain and spectral information about the infinitesimal generator of the underlying 
L\'evy  process. In small time, the asymptotic expansion of SHC contains the geometric quantities of the domain such as volume or perimeter in the leading-order terms. On the other hand, the large time behavior of SHC contains the spectral information such as eigenvalues and eigenfunctions of the associated infinitesimal generators of the stochastic process. This places the study of SHC at the intersection of probability theory, geometry, spectral theory, and partial differential equations. 

While the SHC for Brownian motion has been well 
studied since the late 1980s, its analysis for jump-type L\'evy processes began only recently, from the mid-2010s. The small-time 
asymptotics for SHC for subordinate killed Brownian motion by stable 
subordinators was investigated in \cite{PS19} 
and this was extended for more general subordinators in 
\cite{KP23-2}. A closely related process is the subordinate Brownian 
motion, where one observes Brownian motions on a random clock marked 
by a subordinator (a non-decreasing L\'evy process). For a 
non-degenerate subordinator, its sample path is 
discontinuous, and the corresponding subordinate Brownian motion becomes a jump
process.  The SHC for such jump-type L\'evy processes has been studied previously; in 
particular, the small-time asymptotics for SHC for isotropic stable processes were 
investigated in \cite{PS22}, and this was extended to more general L\'evy processes in 
\cite{KP23-3} and \cite{LP25}. 

In \cite{GPS19}, the authors studied the SHC for a one-dimensional symmetric L\'evy 
process in $\R$ with regularly varying characteristic exponent. They showed that when the underlying open set has infinitely many components, the decay rate of SHC is faster than SHC on domains with finitely many components (see \cite[Lemma 4.9]{GPS19}).
A natural question arises concerning the precise decay rate of the SHC on domains with infinitely many components. 
A natural way for a \textit{bounded} open set to possess infinitely many components is to consider the sets of the form given in \eqref{eqn:SS}.

Domains of the form \eqref{eqn:SS} in $\mathbb{R}^d$ typically have fractal boundaries, and such open sets are referred to as fractal drums in \cite{Lapidus17, Lapidus13}. Various problems concerning fractal drums have been investigated, including the Weyl-Berry conjecture for the eigenvalues of the Laplacian on bounded domains in \cite{Lapidus91} and its connection to the Riemann Hypothesis in \cite{LM95}. SHC on fractal drums with respect to Brownian motion was studied in \cite{LV1996}. Subsequently, SHC on fractal drums associated with jump processes was examined. In particular, \cite{PX23} investigated the SHC for subordinate killed Brownian motions by stable subordinators.

The purpose of this paper is to study the SHC on fractal drums for isotropic stable processes. 
A stable process can be realized as 
a subordinate Brownian motion driven by a stable subordinator, where one 
observes the underlying Brownian motion on the random clock determined by a stable subordinator. The resulting process 
evolves through jumps, which is related, but different from the 
process studied in \cite{PX23}. Due to the scaling property of 
stable processes, the SHC for stable processes inherits a 
corresponding scaling property under similitudes.
However, the additivity property of the SHC under disjoint unions, 
crucial in \cite{PX23}, fails for jump processes. This failure 
arises because the time-changed Brownian motion, being observed on 
random clocks, may have wandered back into the domain after the 
underlying Brownian motion has exited it. Consequently, the direct 
application of the renewal theorem becomes nontrivial. 
To address this difficulty, we introduce the auxiliary terms 
$\mathcal{D}(t)$ and $\mathcal{R}(t)$ (see \eqref{eqn:D and R}), 
which quantify the extent to which the SHC for stable processes 
fails to be additive. We then show that the error term $\mathcal{R}(t)$ decays 
exponentially fast, a property that enables the use of the renewal theorem in 
our analysis.

The main results of this paper are Theorems \ref{thm:higher alpha} 
and \ref{thm:lower alpha}. For each theorem, we impose different 
assumptions, Conditions \ref{cond:higher alpha} and  
\ref{cond:lower alpha}, on the underlying open set $G$.
Let $\b$ be the 
interior Minkowski dimension of the boundary $\partial G$ of the domain.
When $\alpha\in (d-\b,2)$, we impose Condition \ref{cond:higher alpha} that 
each rescaled copies of $G$ and the extra component $G_0$ are contained in 
intervals that are disjoint with other parts of $G$ when $d=1$, or they are 
contained in $C^{1,1}$ open sets $\mathcal{O}_{j}$ and $G_0$ is $C^{1,1}$ 
when $d\geq 2$. 
Under these assumptions, we prove in Theorem \ref{thm:higher alpha} that the 
decay rate of SHC is of the order $t^{\frac{d-\b}{\alpha}}$, and 
there are two regimes depending on whether the sequence of logarithms of the 
coefficients of the similitudes is arithmetic or not. A similar phenomenon 
was observed for SHC for the subordinate killed Brownian motions in 
\cite[Theorem 3.2]{PX23}. When $\alpha\in (0,d-\b]$, we impose Condition 
\ref{cond:lower alpha}, which is different from Condition 
\ref{cond:higher alpha}. In this case, we assume that two different 
dimensions, the interior Minkowski dimension and the Hausdorff dimension, of 
$\partial G$ coincide and the Hausdorff measure $\mathcal{H}^{\b}
(\partial G)$ is strictly positive. 
In Theorem \ref{thm:lower alpha} of the present paper, we show that the decay rate of SHC when 
$\alpha\in (0,d-\b]$ is of order $t$, which is a sharp contrast from the results 
in \cite{PX23} when $\alpha=d-\b$. 
In \cite[Theorems 3.4 and 3.5]{PX23}, the decay rates of SHC for subordinate 
killed Brownian motions are of the order $t\ln(1/t)$ or $t$ depending on whether 
$\alpha=d-\b$ or $\alpha\in (0,d-\b)$, respectively.
Furthermore, when $\alpha=d-\b$, the small-time asymptotics of SHC 
for subordinate killed Brownian motions depends on whether the 
sequence of logarithms of the coefficients of the similitudes is 
arithmetic. 
Hence, when $\alpha=d-\b$, the decay rates for SHC for killed stable 
processes and subordinate killed Brownian motions by stable 
subordinators are not comparable.
This is because the boundary of the domain is polar for the stable 
process, and the process never \textit{sees} the boundary. 
Consequently, it cannot detect that the domain has a fractal 
boundary. 
We find this phenomenon very interesting, as the decay rates for SHC for jump processes 
and the corresponding subordinate killed Brownian motions are known to be comparable for smooth domains (see \cite{GPS19, KP23-2, KP23-3, LP25, PS19}).
To the best of the authors' knowledge, this provides the first example in which the decay rates of these two spectral heat contents are not comparable.

The organization of this paper is as follows. 
In Section \ref{Preliminaries}, we provide some preliminaries on 
isotropic stable processes, heat content and  fractal drums,  
and recall the renewal theorem, Theorem 
\ref{thm:RT}. 
In Section \ref{Body}, we prove our main 
results, Theorems \ref{thm:higher alpha} and \ref{thm:lower alpha}. 
While proving these theorems, we discovered minor inaccuracies 
in \cite[Corollary 3.7]{GPS19} when $\alpha\in (d-\b,1)$. See Remark 
\ref{remark:mistake} for details. 
For comparison 
purpose, we also investigate the regular heat content (without 
Dirichlet exterior condition) on fractal drums and show that the 
asymptotics of the regular heat content is not affected by the fractal boundary.
In Section \ref{example}, we provide some concrete examples where
our main theorems can be applied to study the SHC for stable
processes on an open set on $\R$ whose boundary is the ternary Cantor set. We also investigate the regular heat content (without
Dirichlet exterior condition) on the same set and show that the 
regular heat content for this set is the same as the open interval 
$(0,1)$. This is because the boundary, the ternary Cantor set, has 
the Lebesgue measure zero and cannot be observed at any fixed 
time $t$ even though the stable process can observe the boundary in 
a time interval $(0,t)$. 
Finally, we investigate the SHC on the complement of the modified Sierpi\'nski gasket in $\R^2$. 

In this paper, we use $c_i$ ($i = 1, 2, \ldots $) to denote 
constants whose 
values are unimportant and may change from one appearance to another.
The notation $\P_{x}$ stands for the law of the underlying processes 
started at $x\in \R^d$. The notation $D$ is used for a generic open 
set and $G$ will be used for a set given as in \eqref{eqn:SS}. 
We use $\delta_{D}(x)=\text{dist}(x,D^c)$ to denote the distance 
from $x$ into $D^c$. All sets will be assumed to be Borel 
measurable. For any Borel measurable set $A\subset \R^{d}$, 
$m(A)$ will be used for the $d$-dimensional Lebesgue measure 
of $A$ and $\mathcal{H}^{a}(A)$ is for the $a$-dimensional Hausdorff 
measure of $A$. 
For any Borel measurable set $B\subset \R^{d}$ with differentiable boundary $\partial B$, we denote the perimeter of $B$ by $|\partial B|$.

\section{Preliminaries}\label{Preliminaries}
Let $X=\{X_{t}\}_{t\geq 0}$ be an isotropic stable process in $\R^d$ with stability index $\alpha\in (0,2)$. 
The characteristic exponent of $X_t$ is $\E[e^{i\xi X_{t}}]=e^{-t|\xi|^{\alpha}}$ and the L\'evy density is given by $J(x)=\frac{c(d,\alpha)}{|x|^{d+\alpha}}$.
Note that the isotropic stable process $X$ is a subordinate (time-changed) 
Brownian motion. More precisely, for $\alpha \in (0,2)$, let $S^{(\alpha/2)}=\{S_{t}^{(\alpha/2)}\}_{t\geq 0}$ be a stable subordinator 
whose Laplace transform is given by 
\[
\E[e^{-\lam S_{t}^{(\alpha/2)}}]=e^{-t\lam^{\alpha/2}}, \quad \lam>0,
\]
and $W=\{W_{t}\}_{t\geq 0}$ is a Brownian motion in $\R^d$ independent of $S^{(\alpha/2)}$. Then, the process $W_{S^{(\alpha/2)}}=\{W_{S^{(\alpha/2)}_t}\}_{t\geq 0}$ becomes an isotropic stable process in $\R^d$ with index $\alpha$. 

Let $D\subset \R^d$ be an open set. When $\partial D$, the boundary of $D$, is a fractal (by this, we mean the Hausdorff dimension of $\partial D$ has a non-integer value), we will call $D$ a fractal drum as in \cite{Lapidus17, Lapidus13}. 
Let $\tau_{D}=\inf\{t>0: X_{t}\notin D\}$ be the first exit time. 
The spectral heat content $Q_{D}^{(\alpha)}(t)$ with respect to $X$ on the open set $D$ is defined by
\[
Q_{D}^{(\alpha)}(t)=\int_{D}\P_{x}(\tau_{D}>t)dx. 
\]
Note that there is a closely related quantity called the spectral heat content for \textit{subordinate killed Brownian motions} (killing the underlying Brownian motion when it first exits the domain, then doing the time-change).
The spectral heat content $\tilde{Q}_{D}^{(\alpha)}(t)$ for 
subordinate killed Brownian motions with respect to the stable subordinator $S^{(\alpha/2)}$ on $D$ is defined by 
\[
\tilde{Q}_{D}^{(\alpha)}(t)=\int_{D}\P_{x}\Big(\tau_{D}^{(2)}> S_{t}^{(\alpha/2)}\Big)dx,
\]
where $\tau_{D}^{(2)} = \inf\{t>0 : W_{t}\notin D\}$.
The small-time asymptotic behavior of $\tilde{Q}_{D}^{(\alpha)}(t)$ is investigated on $C^{1,1}$ open sets in \cite{PS19} and on fractal drums in \cite{PX23}.

Now we introduce the type of fractal drums that we consider in this paper.
Recall that a map $R:\R^{d}\to \R^{d}$ is called a \textit{similitude} with 
coefficient $r>0$ if $|Rx-Ry|=r|x-y|$ for all $x, y\in \R^{d}$. It is well known 
(cf. e.g., \cite[p.191]{LV1996}) that any similitude is a composition of a homothety
with coefficient $r$, an orthogonal transform, and a translation. 
Let $G_0$ be an open set in $\R^d$ and let $R_j$ ($j = 1, \ldots, N)$ be similitudes 
with coefficients $r_j$.
For each $n \ge 1$, define $\Upsilon_n = \{ {\boldsymbol j} = (j_1, \ldots, 
j_n),\ 1 \le j_i \le N\}$.  
Levitin and Vassiliev \cite{LV1996} defined the open set $G$ by
\beq\label{eqn:SS1}
G= \bigg(\bigcup_{n=1}^{\infty} \bigcup_{ {\boldsymbol j } \in \Upsilon_n}  
{\mathscr R }_{\j}G_0 \bigg)\cup G_{0},
\eeq
where, for every $\j = (j_1, \ldots, j_n) \in \Upsilon_n$, ${\mathscr R }_{\j}$
is the similitude  defined by ${\mathscr R }_{\j} =R_{j_1}\circ \cdots \circ R_{j_n}$. 

It follows from \eqref{eqn:SS1} (see also \cite[Equation (1.8)]{LV1996}) that $G$ satisfies the following property
\beq\label{eqn:SS}
G= \bigg(\bigcup_{j=1}^{N}R_{j}G\bigg) \cup G_{0}.
\eeq
Thus, the family of open sets that satisfy \eqref{eqn:SS} is larger. Moreover, it is more convenient to construct fractal 
drums by choosing appropriate open sets $G_0$ in \eqref{eqn:SS}. 
For example,  the complement of the ternary Cantor set $\mathfrak{D}$ satisfies \eqref{eqn:SS} with $G_0 = (\frac 1 3, \frac 2 3)$,
 $R_1(x) = \frac{x}{3}$ and $R_2(x) = \frac{x}{3} + \frac2 3$; and the complement of the Sierpi\'nski gasket  $\mathfrak{G}$ satisfies
 \eqref{eqn:SS} with $G_0$ being a triangle whose vertices are $(1/4, \sqrt{3}/4)$, $(1/2,0)$ and $(3/4,\sqrt{3}/4)$ and 
 $R_{1}(x)=\frac{x}{2}$, $R_{2}(x)=\frac{x}{2}+(\frac12, 0)$,  and $R_{3}(x)=\frac{x}{2}+(\frac14,\frac{\sqrt{3}}{4})$. See Section 4 for more information. 
Hence, in the present paper, we will use property \eqref{eqn:SS} to define
the open sets $G$. This more general definition allows us to 
assume that all the sets $R_{j}G $, $1 \le j \le N$, and $G_0$ are pairwise disjoint. See Condition 3.1 below. 

As in \cite[Equation (1.7)]{LV1996} we also assume that $\sum_{j=1}^{N}r_{j}^{d}
<1<\sum_{j=1}^{N}r_{j}^{d-1}$.
Since the expressions in \eqref{eqn:SS} are pairwise disjoint, the Lebesgue measure of $G$ can be written as
\begin{equation}\label{Eq:G-meas}
m(G)=\sum_{j=1}^{N}r_{j}^{d}m(G)+m(G_{0}).
\end{equation}
The condition $\sum_{j=1}^{N}r_{j}^{d}<1<\sum_{j=1}^{N}r_{j}^{d-1}$ ensures that $G$ has a finite volume 
and there exists a unique number $\b\in (d-1,d)$ such that 
\[
\sum_{j=1}^{N}r_{j}^{\b}=1.
\]
It follows from \cite[Theorem A]{LV1996} that the number $\b$ is equal to the interior Minkowski dimension of $\partial G$. 
For more details, see \cite{LV1996} (pages 193--194).

We recall the following renewal theorem from \cite{LV1996}. 
For a given function $\phi:\R\to \R$, we look for the solution $f$ of the following \textit{renewal equation}
\beq\label{eqn:RE}
f=Lf +\phi,
\eeq
where $f:\R\to \R$, $L_{\gamma}f(z)=f(z-\gamma)$, $\gamma\in \R$, and $Lf(z)=\sum_{j=1}^{N}c_{j}L_{\gamma_{j}}f(z)=\sum_{j=1}^{N}c_{j}f(z-\gamma_{j})$ 
with $c_{j}>0$, $\gamma_{j}$ are distinct points in $\R$, and $\sum_{j=1}^{N}c_{j}=1$.
The following theorem is the renewal theorem on the solution of the renewal equation \eqref{eqn:RE}.
A set of finite real numbers $\{\gamma_{1},\cdots, \gamma_{N}\}$ is \textit{arithmetic}
 if $\frac{\gamma_{i}}{\gamma_{j}}\in \mathbb{Q}$ for all indices. The maximal number $\gamma$ 
 such that $\frac{\gamma_{i}}{\gamma}\in \mathbb{Z}$ is called the span of $\{\gamma_{1},\cdots, \gamma_{N}\}$.
If the set is not arithmetic, it is called \textit{non-arithmetic}.
\begin{thm}[Renewal Theorem \cite{LV1996}]\label{thm:RT}
Suppose that a map $f:\R\to \R$ satisfies the renewal equation \eqref{eqn:RE} with
\[
\lim_{z\to-\infty}f(z)=0,
\]
and 
\[
|\phi(z)|\leq c_{1}e^{-c_{2}|z|}, \quad z\in \R,
\]
for some constants $c_1,c_2>0$. Then, the solution of the renewal equation \eqref{eqn:RE} is given by
\[
f(z)=\sum_{n=0}^{\infty}L^{n}\phi(z)=\phi(z)+\sum_{n=1}^{\infty}\sum_{i_1,\cdots, i_n} 
c_{i_1}\cdots c_{i_n}L_{\gamma_{i_1}}\cdots L_{\gamma_{i_{n}}}\phi(z),
\]
where $i_1,\cdots, i_n$ take values from 1 to $N$ and are not necessarily different.
Furthermore, if $\{\gamma_{j}\}$ is non-arithmetic, then 
$$
f(z)=\frac{1}{\sum_{j=1}^{N}c_{j}\gamma_{j}}\int_{-\infty}^{\infty}\phi(x)dx +o(1) \text{ as } z\to\infty.
$$ 
If $\{\gamma_{j}\}$ is arithmetic with span $\gamma$, then
$$
f(z)=\frac{\gamma}{\sum_{j=1}^{N}c_{j}\gamma_{j}}\sum_{k=-\infty}^{\infty}\phi(z-k\gamma) +o(1) \text{ as } z\to\infty.
$$
\end{thm}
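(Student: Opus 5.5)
The plan is to prove the first assertion, the series representation, by iterating the renewal equation and showing that the remainder vanishes. Iterating \eqref{eqn:RE} $n$ times gives
\[
f=\sum_{k=0}^{n-1}L^{k}\phi+L^{n}f .
\]
Here
\[
L^{n}f(z)=\sum_{i_1,\dots,i_n}c_{i_1}\cdots c_{i_n}\,f(z-\gamma_{i_1}-\cdots-\gamma_{i_n}).
\]
This is $\E[f(z-S_n)]$, where $S_n$ is a random walk with i.i.d.\ steps taking the value $\gamma_j$ with probability $c_j$. In the intended application the $\gamma_j=-\ln r_j>0$, so I assume all $\gamma_j>0$. Then $S_n\ge n\min_j\gamma_j\to\infty$ deterministically, hence $z-S_n\to-\infty$ uniformly. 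Since $f(z)\to0$ as $z\to-\infty$, we get $L^nf(z)\to0$, provided $f$ is bounded on half-lines $(-\infty,z]$.

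Local boundedness needs a separate argument. Write $M(z)=\sup_{w\le z}|f(w)|$. Because $\gamma_j>0$, the equation gives $M(z)\le M(z-\gamma_{\min})+\sup|\phi|$. Together with the fact that $f$ is small near $-\infty$, this shows $M$ is finite everywhere. Convergence of $\sum_n L^n\phi$ then follows as well. Indeed,
\[
\sum_n L^n|\phi|(z)=\E\sum_n|\phi(z-S_n)|\le c_1\sum_{m\in\mathbb Z}\E\#\{n:S_n\in[m,m+1)\}\,e^{-c_2|z-m|}.
\]
Since steps are at least $\gamma_{\min}$, each unit interval contains at most $1+1/\gamma_{\min}$ points of the walk. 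The series is therefore bounded by a convergent geometric sum, uniformly in $z$.

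For the asymptotics, rewrite the series as
\[
f(z)=\int\phi(z-y)\,U(dy),\qquad U=\sum_{n\ge0}\P(S_n\in\cdot)
\]
the renewal measure of the walk. The mean step is $\mu=\sum_j c_j\gamma_j$. In the non-arithmetic case, Blackwell's renewal theorem gives $U([y,y+h])\to h/\mu$ as $y\to\infty$. Since $\phi$ is directly Riemann integrable (it is dominated by $c_1e^{-c_2|z|}$, and I would check or assume it is a.e.\ continuous), the key renewal theorem yields $f(z)\to\mu^{-1}\int\phi$. The $o(1)$ formulation is exactly this statement.

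In the arithmetic case with span $\gamma$, $U$ is supported on $\gamma\mathbb Z_{\ge0}$ and $U(\{k\gamma\})\to\gamma/\mu$. Then
\[
f(z)=\sum_k\phi(z-k\gamma)U(\{k\gamma\}).
\]
Replacing $U(\{k\gamma\})$ by $\gamma/\mu$ costs at most
\[
\sum_k|\phi(z-k\gamma)|\,\bigl|U(\{k\gamma\})-\gamma/\mu\bigr|
\]
plus the tail term for $k<0$, which is $O(e^{-c_2z})$. The first sum tends to $0$ by dominated convergence: the weights $e^{-c_2|z-k\gamma|}$ concentrate near $k\approx z/\gamma\to\infty$, where the discrepancy is small.

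The main obstacle is justifying direct Riemann integrability, or, in the non-arithmetic case, replacing it. If $\phi$ is only measurable with exponential decay, I would first prove the limit for $\phi$ continuous with compact support via Blackwell. I would then approximate a general $\phi$ from above and below by such functions, using the uniform bound on $U$ of unit intervals established above.
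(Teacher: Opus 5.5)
The paper gives no proof of this statement; it is quoted from \cite{LV1996}. Your main line is the standard argument and is sound: iterate to $f=\sum_{k<n}L^k\phi+L^nf$, kill $L^nf$ via $S_n\ge n\gamma_{\min}$, write $f(z)=\int\phi(z-y)\,U(dy)$, and use Erd\H{o}s--Feller--Pollard plus dominated convergence in the arithmetic case and Blackwell/the key renewal theorem in the non-arithmetic case. Your assumption $\gamma_j>0$ is left implicit in the statement, and the application provides it. The local-boundedness detour is unnecessary, since $|L^nf(z)|\le\sup_{w\le z-n\gamma_{\min}}|f(w)|\to0$ directly. The arithmetic case needs no regularity of $\phi$ at all.

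The step that fails is your fallback for merely measurable $\phi$ in the non-arithmetic case. Sandwiching $g\le\phi\le h$ by continuous functions with $\int(h-g)$ small is possible exactly when $\phi$ is Riemann integrable on compacts. Because $U$ is purely atomic (carried by the countable semigroup generated by the $\gamma_j$), no $L^1$-type approximation can substitute for the sandwich.

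In fact the conclusion is false without such regularity. Let $\Lambda$ be the additive group generated by $\gamma_1,\dots,\gamma_N$, and let $\phi(z)=e^{-|z|}\mathbf{1}_{\Lambda}(z)$. Then $f=\sum_nL^n\phi$ satisfies every hypothesis: $f=Lf+\phi$, $f(z)\to0$ as $z\to-\infty$, and $|\phi(z)|\le e^{-|z|}$. Moreover $\int\phi=0$. But $f\equiv0$ off $\Lambda$, while for $z\in\Lambda$ we have $f(z)=\int e^{-|z-y|}\,U(dy)\to2/\mu$.

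So you should not try to remove the assumption. Instead, state that $\phi$ is bounded and a.e.\ continuous on compacts; together with the exponential bound this is direct Riemann integrability, which the statement as reproduced in the paper omits. In the paper's application this hypothesis does hold. There $\phi(z)=\mathcal{R}(e^{-z})$ is built from the nonincreasing functions $t\mapsto Q^{(\alpha)}_D(t)$ times a continuous factor, so it is locally of bounded variation, and your key-renewal-theorem step goes through.
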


\section{Main Results} 
\label{Body}

In this section, we prove the main theorems of this paper, Theorems 
\ref{thm:higher alpha} and \ref{thm:lower alpha}, on the 
spectral heat content for isotropic stable processes on  a fractal drum $G$ 
that satisfies \eqref{eqn:SS} for some open set $G_0$. For each theorem, we 
need to impose the following conditions depending on whether $\alpha\in (d-\b, 2)$ or $\alpha\in (0,d-\b]$. 
\begin{cond}\label{cond:higher alpha}
\rm{
When $\alpha\in (d-\b,2)$, the following conditions hold. 

When $d=1$: 
\begin{enumerate}[(a)]
\item There exist disjoint open intervals $(a_0, b_0)$ and $(a_j,b_j)$, $j\in \{1,\cdots, N\}$, such that  $G_0\subset (a_0,b_0)$ and $R_{j}G\subset(a_j,b_j)$ for $1\leq j\leq N$.
\item $(a_0,b_0)\cap (G\setminus G_{0})=\emptyset$ and $(a_j,b_j)\cap (G\setminus R_{j}G)=\emptyset$.
\end{enumerate}

When $d\geq 2$, we assume:
\begin{enumerate}[(a)]
\item $G_0$ is a bounded $C^{1,1}$ open set.
\item For each $j\in \{1,2,\cdots, N\}$, there exist bounded $C^{1,1}$ open sets $\mathcal{O}_j$ such that $R_jG\subset \mathcal{O}_j$, $\mathcal{O}_i\cap \mathcal{O}_j=\emptyset$ for $i\neq j$,\footnote{This condition is related to, but not the same as the Open Set Condition (OSC) (cf. e.g., 
\cite[p.139]{Fal03}). More specifically, the OSC assumes that $\mathcal{O}_j = R_j (O)$ for some open set $O$ such that $\mathcal{O}_i\cap \mathcal{O}_j=\emptyset$ for $i\neq j$, but does not require $O$ or $R_j (O)$ to be $C^{1,1}$.} and $\mathcal{O}_j\cap G_0=\emptyset$ for all $j\in\{1,2,\cdots, N\}$.
\end{enumerate}
}
\end{cond}

\begin{cond}\label{cond:lower alpha}
When $\alpha\in (0,d-\b]$, we assume that the interior Minkowski dimension $\b$ of $\partial G$ and the Hausdorff 
dimension $\text{dim}_{\H}(\partial G)$ of $\partial G$ coincide and 
$
\H^{\b}(\partial G)>0.
$
\end{cond}

Here is a remark about Condition 3.2.  If we assume that 
\[
\partial (G\backslash G_0) = \bigcup_{j=1}^N R_j \partial (G \backslash G_0),
\]
i.e., $\partial (G\backslash G_0)$ is a self-similar set, and the open sets $R_j  G$ ($i = 1, \ldots, N$)
and $G_0$ are pairwise disjoint. Then the similitudes $R_1, \ldots, R_N$ satisfy the open set condition (cf. e.g., 
\cite[p.139]{Fal03}). 
Hence, we have  $\dim_{\H} \partial (G\backslash G_0)= \dim_{\cal M} \partial (G\backslash G_0)= \b$ and 
$\H^{\b}(\partial (G\backslash G_0))>0$, see \cite[Theorem 9.3]{Fal03}. 
Consequently, Condition 3.2 holds. By applying this, one can verify that both the complement of the Cantor set and 
the complement of the Sierpi\'nski gasket satisfy Condition 3.2.

We start with the following lemma on the scaling property of the spectral heat 
content $Q_{D}^{(\alpha)}(t)$, which is analogous to \cite[Lemma 2.7]{PX23}.
Since the proof follows from the scaling property of $X$ and is almost identical 
to that of \cite[Lemma 2.7]{PX23}, it will be omitted.
\begin{lemma}\label{lemma:scaling}
Let $R$ be a similitude with coefficient $r$ and $D$ is any open set in 
$\R^{d}$. Then, we have
$$
Q_{RD}^{(\alpha)}(t)=r^{d}Q_{D}^{(\alpha)}(t/r^{\alpha}), \quad t>0.
$$
\end{lemma}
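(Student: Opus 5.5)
We write $R x = rUx + b$, where $U$ is an orthogonal matrix and $b\in\R^d$. The proof combines two invariance properties of the isotropic stable process $X$.

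The first is invariance under isometries. If $X$ starts at $x$, then $UX+b$ is an isotropic stable process started at $Ux+b$. This holds because the characteristic exponent $|\xi|^{\alpha}$ is invariant under orthogonal maps. The second is self-similarity: under $\P_x$, the process $\{rX_{t/r^{\alpha}}\}_{t\ge0}$ has the law of $X$ under $\P_{rx}$. This follows from $\E[e^{i\xi\cdot rX_{t/r^\alpha}}]=e^{-(t/r^{\alpha})|r\xi|^{\alpha}}=e^{-t|\xi|^\alpha}$ together with the independence and stationarity of increments. Combining the two, under $\P_x$ the process $\{R X_{t/r^\alpha}\}_{t\ge 0}$ is an isotropic stable process started at $Rx$.

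Next we compare exit times. Since $R$ is a bijection of $\R^d$, we have $RX_{s/r^\alpha}\in RD$ if and only if $X_{s/r^\alpha}\in D$. Therefore the first exit time of the process $RX_{\cdot/r^\alpha}$ from $RD$ equals $r^{\alpha}\tau_D$, computed for $X$. It follows that $\P_{Rx}(\tau_{RD}>t)=\P_x(\tau_D>t/r^{\alpha})$ for every $x\in D$. Here we use only that the exit time is a measurable functional of the path, so it is determined by the law of the process.

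Finally we integrate over $RD$ using the change of variables $y=Rx$, whose Jacobian is $r^d$:
\[
Q_{RD}^{(\alpha)}(t)=\int_{RD}\P_y(\tau_{RD}>t)\,dy=r^d\int_D \P_{Rx}(\tau_{RD}>t)\,dx=r^d\int_D\P_x(\tau_D>t/r^\alpha)\,dx=r^dQ_D^{(\alpha)}(t/r^\alpha).
\]

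There is no real obstacle in this argument. The only point needing care is that equality in law of the processes, and not just of their one-dimensional marginals, is what transfers exit-time probabilities. This is justified because both processes are Lévy processes with the same characteristic exponent and the same starting point, and hence have the same finite-dimensional distributions and the same law on path space.
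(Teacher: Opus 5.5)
Your proof is correct and takes the same approach as the paper. The paper omits the argument, saying only that it follows from the scaling property of $X$ as in \cite[Lemma 2.7]{PX23}; your combination of orthogonal/translation invariance, the self-similarity of $rX_{\cdot/r^{\alpha}}$, the pathwise identity $\tau_{RD}(RX_{\cdot/r^{\alpha}})=r^{\alpha}\tau_D(X)$, and the change of variables $y=Rx$ with Jacobian $r^d$ is exactly that argument written out.
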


The next lemma concerns the \textit{super-additivity} of the spectral heat content 
for stable processes. For the spectral heat content of subordinate killed Brownian 
motions, an exact additivity property holds under disjoint unions of domains (see 
\cite[Lemma 2.5]{PX23}). However, in the case of stable processes, the process can 
jump directly from one component of the domain to another without exiting it. This 
phenomenon causes the spectral heat content for stable processes to be super-additive rather than additive. The strict inequality in (\ref{super}) may hold and 
this is one of the main sources of difficulty in applying the renewal theorem.

\begin{lemma}\label{lemma:super additivity}
Let $D_{1}$ and $D_{2}$ be open sets in $\R^{d}$ with $D_{1}\cap D_{2}=\emptyset$. 
Then, for $\alpha\in(0,2)$
\begin{equation}\label{super}
Q^{(\alpha)}_{D_1\cup D_{2}}(t)\geq Q^{(\alpha)}_{D_1}(t)+Q^{(\alpha)}_{D_2}(t).
\end{equation}
\end{lemma}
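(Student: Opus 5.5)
The plan is to argue pathwise via monotonicity of exit times under domain inclusion and then integrate over the starting point. Set $D=D_1\cup D_2$. Since $D_1\subset D$, for every $x\in D_1$ and every sample path, the first time the path leaves $D$ is at least the first time it leaves $D_1$. Indeed, if $X_s\in D_1$ for all $s<\tau_{D_1}$, then $X_s\in D$ for those $s$. Hence $\tau_{D_1}\le \tau_D$ $\P_x$-almost surely, and so
\[
\P_x(\tau_{D_1}>t)\le \P_x(\tau_D>t), \qquad x\in D_1 .
\]
The same reasoning gives $\P_x(\tau_{D_2}>t)\le \P_x(\tau_D>t)$ for $x\in D_2$.

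Next, because $D_1\cap D_2=\emptyset$, the integral defining $Q^{(\alpha)}_D(t)$ splits exactly over the two pieces:
\[
Q^{(\alpha)}_{D}(t)=\int_{D_1}\P_x(\tau_D>t)\,dx+\int_{D_2}\P_x(\tau_D>t)\,dx .
\]
Applying the pointwise inequalities to each integral gives $Q^{(\alpha)}_D(t)\ge Q^{(\alpha)}_{D_1}(t)+Q^{(\alpha)}_{D_2}(t)$, which is \eqref{super}. All quantities are nonnegative, and they are finite or $+\infty$ consistently, so no integrability issue arises. This holds for every $\alpha\in(0,2)$ and uses nothing specific to stable processes beyond their being Markov processes with the stated exit times.

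There is no genuine obstacle. The only point to state carefully is the monotonicity $\tau_{D_1}\le\tau_D$. It follows directly from the definition $\tau_{A}=\inf\{t>0:X_t\notin A\}$: the event $\{X_t\notin D\}$ is contained in $\{X_t\notin D_1\}$, so the infimum over the larger set of times is smaller. It is also worth remarking why the inequality can be strict. A path started in $D_1$ may jump directly into $D_2$ without ever visiting $D^c$; then $\tau_{D_1}<\tau_D$, and this happens with positive probability because the L\'evy density $J$ is positive everywhere.
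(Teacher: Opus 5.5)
Your proof is correct and is essentially the paper's argument: split $Q^{(\alpha)}_{D_1\cup D_2}(t)$ over the disjoint pieces $D_1$ and $D_2$, then bound each integrand below using the pathwise monotonicity $\tau_{D_i}\le\tau_{D_1\cup D_2}$. Your closing aside only shows $\tau_{D_1}<\tau_{D_1\cup D_2}$ with positive probability; strict inequality in \eqref{super} would also need $\P_x(\tau_{D_1}\le t<\tau_{D_1\cup D_2})>0$ on a set of positive measure, but the lemma does not require this.
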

\pf The proof is almost trivial, but we provide the details for the reader's convenience. Note that 
\begin{align*}
Q^{(\alpha)}_{D_1\cup D_{2}}(t)&=\int_{D_1\cup D_2}\P_{x}(\tau_{D_1\cup D_2}>t)dx
=\int_{D_1}\P_{x}(\tau_{D_1\cup D_2}>t)dx+\int_{D_2}\P_{x}(\tau_{D_1\cup D_2}>t)dx\\
&\geq \int_{D_1}\P_{x}(\tau_{D_1}>t)dx+\int_{D_2}\P_{x}(\tau_{D_2}>t)dx
=Q^{(\alpha)}_{D_1}(t)+Q^{(\alpha)}_{D_2}(t).
\end{align*}
\qed

To overcome the aforementioned difficulty, we estimate the probability that the
process jumps from one component of the domain to another and remains inside the domain up to time $t > 0$.
We first recall the following estimate on the measure of the set of points located at a fixed distance from the boundary of a $C^{1,1}$ open set.
Recall that an open set $D$ in $\R^d$ is said to be a $C^{1, 1}$ open set if 
there exist positive constants $R_0$ and $\Lambda_0$ such that, 
for every $z\in \partial D$, there exist a $C^{1, 1}$ function $\psi=\psi_z: \R^{d-1}\to \R$ satisfying $\psi(0)=0$, $\nabla \psi(0)
=(0, \cdots, 0)$, $\|\nabla\psi\|_\infty\le \Lambda_0$, $|\nabla \psi(x)-\nabla \psi(y)|\le \Lambda_0 |x-y|$ and an orthonormal coordinate 
system $CS_z: y=(\widetilde y, y_d)$
with origin at $z$ such that
\[
B(z, R_0)\cap D=B(z, R_0)\cap \{ y=(\widetilde y, y_d) \mbox{ in } CS_z: y_d>\psi(\widetilde y)\}.
\]
The pair $(R_0, \Lambda_0)$ is called a $C^{1,1}$ characteristic of the open set $D$, and $R_0$ is called a localization radius. 

\begin{lemma}{\cite[Lemma 5]{vB87}}\label{lemma:stability}
Let $D$ be a bounded $C^{1,1}$ open set in $\R^{d}$ with characteristic $(R_0, \Lambda_0)$ and define for $0\leq q <R_{0}$,
$$
D_{q}=\{x\in D : \delta_{D}(x)>q\}.
$$
Then
$$
\left(\frac{R_{0}-q}{R_{0}}\right)^{d-1}|\partial D|\leq |\partial D_{q}| \leq \left(\frac{R_{0}}{R_{0}-q}\right)^{d-1}|\partial D|,
\quad 0\leq q < R_{0}.
$$
\end{lemma}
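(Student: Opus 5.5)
The plan is to compare $|\partial D_q|$ with $|\partial D|$ through the inward normal map. Here $|\partial D|$ denotes surface measure ($\mathcal{H}^{d-1}$).

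\textbf{Step 1 (structure of $\partial D_q$).} For $0\le q<R_0$, the $C^{1,1}$ characteristic gives a uniform interior ball condition: for every $z\in\partial D$, the ball of radius $R_0$ (after shrinking $R_0$ by a constant depending on $\Lambda_0$, if necessary) centred at $z+R_0 n(z)$ lies in $D$. Here $n(z)$ is the inward unit normal. Consequently, every $y\in D$ with $\delta_D(y)<R_0$ has a unique nearest point $\pi(y)\in\partial D$, and $y=\pi(y)+\delta_D(y)\,n(\pi(y))$. Hence
\[
\partial D_q=\{x\in D:\delta_D(x)=q\}=\Phi_q(\partial D),\qquad \Phi_q(z)=z+q\,n(z),
\]
and $\Phi_q$ is a bijection from $\partial D$ onto $\partial D_q$.

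\textbf{Step 2 (Jacobian of $\Phi_q$).} The normal field $n$ is Lipschitz, so $\Phi_q$ is Lipschitz and, by Rademacher's theorem, differentiable $\mathcal{H}^{d-1}$-a.e. on $\partial D$. At such points, $D\Phi_q=I-q\,S$, where $S$ is the shape operator. Its principal curvatures satisfy $|\kappa_i|\le 1/R_0$; this is exactly the interior and exterior ball condition of radius $R_0$. The tangential Jacobian is therefore
\[
J_q(z)=\prod_{i=1}^{d-1}(1-q\kappa_i(z)),
\]
and the area formula gives
\[
|\partial D_q|=\int_{\partial D}J_q\,d\mathcal{H}^{d-1}.
\]

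\textbf{Step 3 (bounds on $J_q$).} Using only the interior ball condition, $\kappa_i\le 1/R_0$, so each factor satisfies $1-q\kappa_i\ge (R_0-q)/R_0$. This gives the lower bound
\[
|\partial D_q|\ge\Bigl(\frac{R_0-q}{R_0}\Bigr)^{d-1}|\partial D|.
\]
For the upper bound I would not bound $1-q\kappa_i$ from above directly. Instead, apply the same argument to the inverse map $\Phi_q^{-1}\colon\partial D_q\to\partial D$, $x\mapsto x-q\,n(\pi(x))$. Its differential has eigenvalues $(1-q\kappa_i)^{-1}$, and these are at most $R_0/(R_0-q)$. By the area formula,
\[
|\partial D|\ \ge\ \Bigl(\frac{R_0-q}{R_0}\Bigr)^{d-1}|\partial D_q|,
\]
which rearranges to the stated upper bound.

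\textbf{Main obstacle.} The hardest step is justifying Step 1 and the regularity in Step 2 with only $C^{1,1}$ smoothness rather than $C^2$. Two things need care: the uniqueness of the nearest point up to distance $R_0$ (possibly after adjusting $R_0$ to the convention in \cite{vB87}), and the a.e.\ curvature bound $|\kappa_i|\le1/R_0$ from the ball conditions. I would derive both from the local graph representation $y_d>\psi(\widetilde y)$ with $\nabla\psi$ Lipschitz. The remaining steps are routine applications of the area formula.
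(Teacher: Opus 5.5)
The paper gives no proof of this lemma; it quotes van den Berg's Lemma 5, which is stated for $R$-smooth boundaries, meaning there are interior and exterior tangent balls of a common radius $R$ at every boundary point. Your normal-map and area-formula setup in Steps 1--2 is the natural route, and your lower bound is correct.

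The upper bound in Step 3, however, does not follow from what you wrote. The tangential Jacobian of $\Phi_q^{-1}$ at $\Phi_q(z)$ is exactly $1/J_q(z)$, so passing to the inverse map adds no information. Your bound $(1-q\kappa_i)^{-1}\le R_0/(R_0-q)$ gives, by the area formula, $|\partial D|\le (R_0/(R_0-q))^{d-1}|\partial D_q|$. That is just your lower bound again. The inequality you actually write, $|\partial D|\ge ((R_0-q)/R_0)^{d-1}|\partial D_q|$, would need $(1-q\kappa_i)^{-1}\ge (R_0-q)/R_0$, i.e.\ a \emph{lower} bound on the principal curvatures. No argument that uses only $\kappa_i\le 1/R_0$ can supply this, because the interior ball condition alone does not bound $|\partial D_q|/|\partial D|$ from above. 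To see this, delete many tiny, well-separated balls from a large ball: the interior ball radius stays fixed, but each hole adds about $\mathcal{H}^{d-1}(\partial B(0,q))$ to $|\partial D_q|$ and almost nothing to $|\partial D|$.

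The repair is exactly the direct bound you set aside. The exterior-ball half of Step 2, $\kappa_i\ge -1/R_0$, gives $0<1-q\kappa_i\le 1+q/R_0\le R_0/(R_0-q)$. Hence $J_q\le (R_0/(R_0-q))^{d-1}$ pointwise, and integrating gives the upper bound, in fact with the better factor $(1+q/R_0)^{d-1}$.

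Your caveat about shrinking $R_0$ is also essential rather than cosmetic. If $R_0$ is the localization radius in the paper's definition of a $C^{1,1}$ open set, the statement is false. Take a disk of radius $w$ in $\R^2$. Any $R_0\in(w,\sqrt2\,w)$ is an admissible localization radius, with $\Lambda_0$ large: $\partial D\cap B(z,R_0)$ is an arc of half-angle less than $\pi/2$ about $z$, hence a graph over the tangent line with $D$ above it. Yet $|\partial D_q|=2\pi(w-q)<(1-q/R_0)\,2\pi w$ for all $0<q<w$, and $D_q=\emptyset$ for $w\le q<R_0$. So the lemma has to be read with $R_0$ replaced by a two-sided ball radius $r_0=r_0(R_0,\Lambda_0)\le R_0$, as in van den Berg. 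That is the statement your argument, once corrected, proves. It is also all that Lemma~\ref{lemma:error high dim} needs, with $R_0/2$ replaced by $r_0/2$ there.
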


Since $X$ is isotropic, the projection of $X$ onto any line passing through the 
origin has the same distribution and is a one-dimensional $\alpha$-stable 
process. Let $\vec{e}_1, \ldots, \vec{e}_d$ be the standard basis of $\R^d$ and 
let  $X^{(i)}_{t}=\langle X_t, \vec{e}_i\rangle$ ($i = 1, \ldots, d$) be the 
coordinate processes of $X$.
For $1 \le i \le d$, let $\overline{X^{(i)}}_{t} = \sup_{0 
\le s \le t}X^{(i)}_{s}$ be the supremum process of $X^{(i)}$. 
We recall the following estimate from \cite[Equation (3.3)]{P22}: There exists a 
constant $c=c(\alpha)>0$ such that for any $L>0$ and $t>0$
\beq\label{eqn:ub}
\P(\overline{X^{(i)}}_{t}>L)\leq \frac{ct}{L^{\alpha}}.
\eeq

Now we are ready to prove Lemma \ref{lemma:error high dim}.
\begin{lemma}\label{lemma:error high dim}
Let $\alpha\in (d-\b,2)$ and assume Condition \ref{cond:higher alpha} holds. 
Then, there exists $t_0>0$ such that for all $0<t\leq t_0$
\[
\max\left(\int_{R_jG}\P_{x}(\tau_{R_jG}< t \leq \tau_{G})dx, \,
\int_{G_0}\P_{x}(\tau_{G_0}\leq t < \tau_{G})dx\right) \leq cE(t),
\]
where $E(t)=t^{1/\alpha},\, t\ln(1/t)$, or $t$ for $\alpha\in (1,2)$, $\alpha=1$, or $\alpha\in (0,1)$, respectively. 
\end{lemma}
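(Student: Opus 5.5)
Both integrals are controlled by the same mechanism. Take a point $x$ of a component $U$ (either $U=R_jG$ or $U=G_0$) for which $\tau_U< t\le \tau_G$ (or $\tau_U\le t<\tau_G$) occurs. Then $X$ leaves $U$ before time $t$ but is still in $G$ at time $\tau_U$. Under Condition \ref{cond:higher alpha}, $U$ sits inside a larger set $V$: the interval $(a_j,b_j)$ or $(a_0,b_0)$ when $d=1$, and $\mathcal{O}_j$ or $G_0$ itself when $d\ge2$. The set $V$ contains no other part of $G$. So at time $\tau_U$ the process must either be in $V\setminus U$ (which is $\partial U$-type, a set missed with probability zero, or not in $G$ at all), or have jumped outside $V$. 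Hence, up to null events, the event in question is contained in $\{\tau_V\le t\}$, where $\tau_V$ is the exit time from $V$.

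For $U=G_0$ with $d\ge 2$ we take $V=G_0$, and for $R_jG$ we take $V=\mathcal{O}_j$. When $d\ge2$ we have $G\setminus U\subset V^c$, so $\{\tau_U\le t<\tau_G\}\subset\{\tau_V\le t\}$ up to the null event $X_{\tau_U}\in\partial U$. The boundary of $U$ may carry positive Lebesgue measure, so we cannot simply say it is hit with probability zero. Instead we use $\tau_U<\tau_G$: the point $X_{\tau_U}$ lies in $G\setminus U$, which is a subset of $V^c$. Therefore $\tau_V\le \tau_U\le t$.

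The first step is therefore
\[
\int_U \P_x(\tau_U\le t<\tau_G)\,dx\le \int_V \P_x(\tau_V\le t)\,dx = m(V)-Q^{(\alpha)}_V(t).
\]
The second step is to bound $\P_x(\tau_V\le t)$ for a bounded $C^{1,1}$ set $V$ (or an interval when $d=1$). If $\tau_V\le t$, then $\sup_{s\le t}|X_s-x|\ge\delta_V(x)$, so some coordinate process, or its negative, has supremum exceeding $\delta_V(x)/\sqrt d$. By \eqref{eqn:ub} and symmetry,
\[
\P_x(\tau_V\le t)\le \min\Big(1,\frac{c\,t}{\delta_V(x)^{\alpha}}\Big).
\]

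The third step integrates this bound with the coarea formula in the distance variable. For $q<R_0$, Lemma \ref{lemma:stability} gives $m(\{x\in V:\delta_V(x)\in dq\})=|\partial V_q|\,dq\le c|\partial V|\,dq$. Points with $\delta_V\ge R_0/2$ contribute at most $c\,t\,m(V)$. Hence the integral is at most
\[
c\int_0^{R_0}\min(1,tq^{-\alpha})\,dq+ct.
\]
Split the integral at $q=t^{1/\alpha}$:
\[
\int_0^{R_0}\min(1,tq^{-\alpha})\,dq\le t^{1/\alpha}+t\int_{t^{1/\alpha}}^{R_0}q^{-\alpha}\,dq.
\]
This is of order $t^{1/\alpha}$ for $\alpha>1$, $t\ln(1/t)$ for $\alpha=1$, and $t$ for $\alpha<1$. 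That is exactly $E(t)$, valid for $t\le t_0$ with, say, $t_0<R_0^\alpha/2$ and $t_0<1$. For $d=1$, $V$ is an interval and the coarea step is trivial: $\delta_V(x)\in dq$ has measure at most $2\,dq$.

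The main obstacle is the inclusion step when $d=1$ in the case $U=R_jG$, because $R_jG$ is itself an infinite union of intervals. We take $V=(a_j,b_j)$ and use condition (b), $(a_j,b_j)\cap(G\setminus R_jG)=\emptyset$. The process leaves $R_jG$ at time $\tau_U$ and is in $G$ at that time. The point $X_{\tau_U}$ is not in $R_jG$, so it lies in $G\setminus R_jG$, which is a subset of $(a_j,b_j)^c$. Therefore the process has exited $V$ by time $t$, and the same bound applies. The constant depends only on $|\partial V|$, $R_0$, $m(V)$ and $N$. Taking the maximum over the finitely many $j$ and $G_0$ gives the lemma.
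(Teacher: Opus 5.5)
Your proposal is correct and follows the paper's proof essentially step by step. You use the disjointness in Condition \ref{cond:higher alpha} to get $X_{\tau_U}\in G\setminus U\subset V^c$, hence $\{\tau_U\le t<\tau_G\}\subset\{\tau_V\le t\}$. You then bound $\int_V\P_x(\tau_V\le t)\,dx$ via \eqref{eqn:ub}, Lemma \ref{lemma:stability} and the coarea formula, splitting at distance $t^{1/\alpha}$ and $R_0/2$. Your tentative ``null event'' remarks about $\partial U$ are unnecessary (the inclusion is exact) and can be deleted; including the negated coordinate processes is, if anything, slightly more careful than the paper's union over the suprema $\overline{X^{(i)}}_t$ alone.
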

\pf
The proof for $d=1$ is almost the same as $d\geq 2$, so we will only provide the proof for $d\geq 2$.  
From Condition \ref{cond:higher alpha}, there exist $C^{1,1}$ open sets $\mathcal{O}_{j}$ such that $R_jG\subset \mathcal{O}_j$ for $1\leq j\leq N$.
Since it is enough to show this only for the case of $R_1G$, we use $RG$ and $\mathcal{O}$ instead of $R_1G$ and $\mathcal{O}_1$ for notational simplicity. First, note that $X_{\tau_{RG}}\in G\setminus RG$ on the event  $\{\tau_{RG}\leq t <\tau_{G} \}$ when the process starts on $x\in RG$. 
The condition  $\mathcal{O}\cap \mathcal{O}_j=\emptyset$ for all 
$\mathcal{O}_j$ such that $\mathcal{O}\neq \mathcal{O}_j$ and 
$\mathcal{O}\cap G_0=\emptyset$ implies that $\tau_{RG}=\tau_{\mathcal{O}}$ on the event  $\{\tau_{RG}\leq t <\tau_{G} \}$
and this implies that when the process starts on $RG$
\[
\{\tau_{RG}\leq t <\tau_{G} \} \subset \{\tau_{\mathcal{O}}\leq t\}.
\]

Let $t_0$ be small enough so that $\max(t_0,t_0^{1/\alpha})<R_0/2$, where 
$R_0$ is a localization radius, one of the $C^{1,1}$ characteristics of 
$\mathcal{O}$. Assume that $t\leq t_0$. 
We write $\O$ as
\[
\O=A(t)\cup B(t),
\]
where
\[
A(t)=\{x\in \O: \delta_{\O}(x)<t^{1/\alpha}\}, \quad B(t)=\O\setminus A(t).
\]

Hence, 
\begin{align}\label{eqn:aux1}
&\int_{RG}\P_{x}(\tau_{RG}< t \leq \tau_{G})dx \leq \int_{\O}\P_{x}(\tau_{RG}< t \leq \tau_{G})dx\nn\\
=&\int_{A(t)}\P_{x}(\tau_{RG}< t \leq \tau_{G})dx +\int_{B(t)}\P_{x}(\tau_{RG}< t \leq \tau_{G})dx.
\end{align}
For the first expression above, we use the trivial upper bound $\P_{x}(\tau_{RG}< t \leq \tau_{G})\leq 1$ and obtain
\begin{align*}
&\int_{A(t)}\P_{x}(\tau_{RG}< t \leq \tau_{G})dx\leq m(A(t)).
\end{align*}
It follows from Lemma \ref{lemma:stability}, the coarea formula, and the fact $\frac{R_0}{R_0-s}\leq \frac{R_0}{R_0-t^{1/\alpha}}\leq \frac{R_0}{R_0-R_0/2}=2$
\beq\label{eqn:aux2}
m(A(t))=\int_{0}^{t^{1/\alpha}}|\partial \O_s|ds\leq \int_{0}^{t^{1/\alpha}} \left(\frac{R_0}{R_0-s}\right)^{d-1}|\partial O|ds\leq 2^{d-1}|\partial \O|t^{1/\alpha}.
\eeq

Now we handle the second expression in \eqref{eqn:aux1}. We further divide $B(t)$ into
\[
B(t)=\{x\in \O: \delta_{\O}(x)\geq t^{1/\alpha}\} =\{x\in \O: t^{1/\alpha}\leq \delta_{\O}(x) <R_0/2\}  \cup \{x\in \O: \delta_{\O}(x)\geq R_0/2\}. 
\]
When the process starts on $\{x\in \O: \delta_{\O}(x)\geq R_0/2\}$, we have $\{\tau_{\O}\leq t\} \subset \bigcup_{i=1}^{d}\{\overline{X^{(i)}}_{t}\geq \frac{R_0}{2\sqrt{d}}\}$. 
Hence, it follows from \eqref{eqn:ub}
\begin{align}\label{eqn:aux3}
&\int_{\{x\in \O: \delta_{\O}(x)\geq R_0/2\}}\P_{x}(\tau_{RG}< t \leq \tau_{G})dx
\leq \int_{\{x\in \O: \delta_{\O}(x)\geq R_0/2\}}\P_{x}(\tau_{\O}< t)dx\nn\\
&\leq d\int_{\{x\in \O: \delta_{\O}(x)\geq R_0/2\}}\P\Big(\overline{X^{(1)}}_{t}\geq \frac{R_0}{2\sqrt{d}}\Big)
\leq cdm(\O)\frac{t}{(\frac{R_0}{2\sqrt{d}})^{\alpha}}.
\end{align}

Finally, for the set $\{x\in \O: t^{1/\alpha}\leq \delta_{\O}(x) <R_0/2\}$, we have $\{\tau_{\O}\leq t\} \subset \bigcup_{j=1}^{d}\{\overline{X^{(i)}}_{t}\geq \frac{\delta_{\mathcal{O}}(x)}{\sqrt{d}}\}$. This, together with Lemma \ref{lemma:stability}, \eqref{eqn:ub} and the coarea formula, implies that
\begin{align*}
&\int_{\{x\in \O: t^{1/\alpha}\leq \delta_{\O}(x) <R_0/2 \}}\P_{x}(\tau_{RG}< t \leq \tau_{G})dx
\leq\int_{\{x\in \O: t^{1/\alpha}\leq \delta_{\O}(x) <R_0/2 \}}\P_{x}(\tau_{\O}\leq t)dx\nn\\
&\leq d\int_{\{x\in \O: t^{1/\alpha}\leq \delta_{\O}(x) <R_0/2 \}}\P \Big(\overline{X^{(1)}}_{t}>\frac{\delta_{\O}(x)}{\sqrt{d}} \Big)dx
=d\int_{t^{1/\alpha}}^{R_0/2}|\partial \O_u|\P \Big(\overline{X^{(1)}}_{t}>\frac{u}{\sqrt{d}} \big)du\nn\\
&\leq cd2^{d-1}d^{\alpha/2}|\partial O|\int_{t^{1/\alpha}}^{R_0/2}\frac{t}{u^{\alpha}}du.
\end{align*} 

It is elementary to see that for $\alpha\in (1,2)$ and $t^{1/\alpha}<R_0/2$
\[
\int_{t^{1/\alpha}}^{R_0/2}\frac{t}{u^{\alpha}}du\leq \int_{t^{1/\alpha}}^{\infty}\frac{t}{u^{\alpha}}du \leq c_{1}t^{1/\alpha}.
\]
For $\alpha=1$ and $t< R_0/2$
\[
\int_{t}^{R_0/2}\frac{t}{u}du\leq  c_{2}t\ln(1/t),
\]
and for $\alpha\in (0,1)$ and $t^{1/\alpha}<R_0/2$
\[
\int_{t^{1/\alpha}}^{R_0/2}\frac{t}{u^{\alpha}}du\leq\int_{0}^{R_0/2}\frac{t}{u^{\alpha}}du\leq c_{3}t.
\]
This shows that there exists a constant $c>0$ such that
\beq\label{eqn:aux4}
\int_{\{x\in \O: t^{1/\alpha}\leq \delta_{\O}(x) <R_0/2 \}}\P_{x}(\tau_{RG}< t \leq 
\tau_{G})dx \leq cE(t).
\eeq
Hence, the conclusion follows from \eqref{eqn:aux2}, \eqref{eqn:aux3}, and \eqref{eqn:aux4}.
\qed

Now we are ready to state and prove Theorem \ref{thm:higher alpha}. 
For convenience, we introduce the following notations:
\beq\label{eqn:D and R}
\mathcal{D}(t)=Q_{G}^{(\alpha)}(t)-\sum_{j=1}^{N}Q_{R_{j}G}^{(\alpha)}(t)-Q_{G_{0}}^{(\alpha)}(t) \quad \text{ and } \quad 
\mathcal{R}(t)=\left(m(G_{0})-Q_{G_{0}}^{(\alpha)}(t)-\mathcal{D}(t)\right)t^{-\frac{d-\b}{\alpha}}.
\eeq

\begin{thm}\label{thm:higher alpha}
Let $d\geq 1$, $\alpha\in (d-\mathfrak{b}, 2)$, where $\b$ is the interior Minkowski
dimension of $\partial G$, and $G$ is a set given as in \eqref{eqn:SS}.
Assume that Condition \ref{cond:higher alpha} holds true. 
\begin{enumerate}
\item If $\{\ln\frac{1}{r_j}\}_{j=1}^{N}$ is non-arithmetic, then we have 
\begin{equation}\label{Eq:Th371}
Q_{G}^{(\alpha)}(t)=m(G)-C_{1}t^{\frac{d-\b}{\alpha}} +o(t^{\frac{d-\b}{\alpha}}) \text{ as } t\downarrow 0,
\end{equation}
where $C_{1}$ is the constant given by 
$$C_{1}=\frac{\int_{-\infty}^{\infty}\mathcal{R}(e^{-z})dz}{\sum_{j=1}^{N}r_{j}^{\b}\ln\frac{1}{r_{j}^{\alpha}}}=
\frac{\int_{0}^{\infty}\left(m(G_{0})-Q_{G_{0}}^{(\alpha)}(u)-\mathcal{D}(u)\right)u^{-1-\frac{d-\b}{\alpha}}dt}{\sum_{j=1}^{N}r_{j}^{\b}\ln\frac{1}{r_{j}^{\alpha}}}.
$$

\item If $\{\ln\frac{1}{r_{j}}\}_{j=1}^{N}$ is arithmetic with span $\rho$, then we have 
\begin{equation}\label{Eq:Th372}
Q_{G}^{(\alpha)}(t)=m(G)-f(-\ln t)t^{\frac{d-\b}{\alpha}} +o(t^{\frac{d-\b}{\alpha}}) \text{ as } t\downarrow 0,
\end{equation}
where 
\[
f(z)=\frac{\alpha\rho}{\sum_{j=1}^{N}r_{j}^{\b}\ln\frac{1}{r_{j}^{\alpha}}}
\sum_{n=-\infty}^{\infty}\mathcal{R}(e^{-(z-n\alpha\rho)}).
\]
\end{enumerate}
\end{thm}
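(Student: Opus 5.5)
We reduce the statement to the renewal theorem, Theorem \ref{thm:RT}, applied to the rescaled deficit of the spectral heat content. Set $F(t)=m(G)-Q_G^{(\alpha)}(t)$. By the definition of $\mathcal{D}(t)$ in \eqref{eqn:D and R} and Lemma \ref{lemma:scaling},
\[
Q_G^{(\alpha)}(t)=\sum_{j=1}^N r_j^d Q_G^{(\alpha)}(t r_j^{-\alpha})+Q_{G_0}^{(\alpha)}(t)+\mathcal{D}(t).
\]
Together with \eqref{Eq:G-meas}, this gives
\[
F(t)=\sum_{j=1}^N r_j^d F(t r_j^{-\alpha}) + m(G_0)-Q_{G_0}^{(\alpha)}(t)-\mathcal{D}(t).
\]
Now change variables $t=e^{-z}$ and put $f(z)=F(e^{-z})e^{-z(d-\b)/\alpha}$. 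Since $r_j^{d}\,(r_j^{-\alpha})^{(d-\b)/\alpha}=r_j^{\b}$, the relation becomes
\[
f(z)=\sum_j r_j^{\b} f(z-\gamma_j)+\mathcal{R}(e^{-z}),\qquad \gamma_j=\ln r_j^{-\alpha}.
\]
This is \eqref{eqn:RE} with $c_j=r_j^{\b}$, which sum to one, and $\phi(z)=\mathcal{R}(e^{-z})$. The set $\{\gamma_j\}$ is non-arithmetic exactly when $\{\ln(1/r_j)\}$ is, and in the arithmetic case its span is $\alpha\rho$. Once the hypotheses of Theorem \ref{thm:RT} are checked, the two conclusions give \eqref{Eq:Th371} and \eqref{Eq:Th372}, with exactly the stated constants: the denominator is $\sum_j c_j\gamma_j=\sum_j r_j^{\b}\ln r_j^{-\alpha}$, and substituting $u=e^{-z}$ turns $\int\phi$ into the $u$-integral in the formula for $C_1$.

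It remains to verify the two hypotheses. First, $\lim_{z\to-\infty}f(z)=0$. Here $t=e^{-z}\to\infty$ and $0\le F(t)\le m(G)$, while $e^{-z(d-\b)/\alpha}=t^{-(d-\b)/\alpha}\to 0$, so this is immediate.

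Second, $|\phi(z)|\le c_1e^{-c_2|z|}$. For $z\to-\infty$ (large $t$), the quantities $m(G_0)-Q^{(\alpha)}_{G_0}(t)$ and $\mathcal{D}(t)$ are bounded, because $0\le \mathcal{D}(t)\le Q_G^{(\alpha)}(t)\le m(G)$ by Lemma \ref{lemma:super additivity}. Multiplying by $t^{-(d-\b)/\alpha}=e^{z(d-\b)/\alpha}$ gives exponential decay. For $z\to+\infty$ (small $t$), write
\[
\mathcal{D}(t)=\sum_j\int_{R_jG}\P_x(\tau_{R_jG}\le t<\tau_G)\,dx+\int_{G_0}\P_x(\tau_{G_0}\le t<\tau_G)\,dx,
\]
so that Lemma \ref{lemma:error high dim} yields $\mathcal{D}(t)\le cE(t)$.

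For $m(G_0)-Q^{(\alpha)}_{G_0}(t)=\int_{G_0}\P_x(\tau_{G_0}\le t)\,dx$ we need a bound of the same order $E(t)$. For $d\ge2$, $G_0$ is a bounded $C^{1,1}$ set, so the argument of Lemma \ref{lemma:error high dim} (splitting by $\delta_{G_0}$ and using Lemma \ref{lemma:stability} with \eqref{eqn:ub}) applies verbatim. For $d=1$, $G_0\subset(a_0,b_0)$ and exits of $G_0$ can be controlled through the supremum estimate \eqref{eqn:ub} in the same way, possibly treating $G_0$ as a finite union of intervals.

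The hypothesis $\alpha>d-\b$ then gives $E(t)t^{-(d-\b)/\alpha}\le ct^{\epsilon}$ for some $\epsilon>0$. Indeed, $(d-\b)/\alpha<1/\alpha$ when $\alpha>1$, and $(d-\b)/\alpha<1$ when $\alpha\le1$ (the latter because $d-\b<\alpha$); the logarithmic factor at $\alpha=1$ is absorbed. Hence $|\phi(z)|\le ce^{-\epsilon z}$ for large $z$.

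\textbf{Main obstacle.} The key point is this small-$t$ bound on the non-additivity term $\mathcal{D}(t)$, which Lemma \ref{lemma:error high dim} supplies. The remaining delicate step is the analogous bound for $G_0$ itself, particularly when $d=1$ and $G_0$ is not assumed to be an interval. If $G_0$ has infinitely many components, there I would fall back on bounding $m(G_0)-Q^{(\alpha)}_{G_0}(t)$ via the containment $G_0\subset(a_0,b_0)$ together with a separate argument. A smaller bookkeeping point is to keep the exponential bound uniform on all of $\R$, not only for large $|z|$; this follows from the boundedness of $\phi$ on compact sets.
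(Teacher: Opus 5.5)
This is essentially the paper's proof: the same decomposition of $Q^{(\alpha)}_G$ through $\mathcal{D}(t)$, the same renewal equation with $c_j=r_j^{\b}$ and $\gamma_j=\alpha\ln(1/r_j)$, and the same use of Lemma~\ref{lemma:error high dim} to get $0\le\mathcal{D}(t)\le cE(t)$. You justify one step differently. You obtain $m(G_0)-Q^{(\alpha)}_{G_0}(t)=O(E(t))$ by rerunning the $C^{1,1}$ argument of that lemma on $G_0$, rather than importing the sharp asymptotics of \cite{PS22} via \cite[Lemma 3.1]{PX23}. That is fine and more self-contained, since only an upper bound is needed. You also bound $|\phi|$ on both tails explicitly, which the paper leaves implicit; its \eqref{Eq:R} is stated one-sidedly.

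Two corrections. First, your normalization has a sign slip. It must be $f(z)=F(e^{-z})e^{z(d-\b)/\alpha}$, because $e^{-z(d-\b)/\alpha}=t^{(d-\b)/\alpha}$, not $t^{-(d-\b)/\alpha}$. Your later computations (the identity $r_j^d(r_j^{-\alpha})^{(d-\b)/\alpha}=r_j^{\b}$ and the limit as $z\to-\infty$) already use the correct version.

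Second, the fallback you suggest for $d=1$ when $G_0$ has infinitely many components cannot work. The inclusion $G_0\subset(a_0,b_0)$ only gives $Q^{(\alpha)}_{G_0}\le Q^{(\alpha)}_{(a_0,b_0)}$, which is the wrong direction. In fact the needed bound is false in that generality. Take the Cantor similitudes (so $\b=\ln 2/\ln 3$) and let $G_0\subset(\frac13,\frac23)$ consist of intervals and gaps of lengths $\asymp n^{-p}$ accumulating at an interior point, with $1<p<1/\b$. Then Condition~\ref{cond:higher alpha} holds, but $m(G_0)-Q^{(\alpha)}_{G_0}(t)\gtrsim t^{(1-1/p)/\alpha}$, which dominates $t^{(1-\b)/\alpha}$. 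Hence $\phi$ is unbounded as $z\to\infty$. (The $\mathcal{D}$ term is still fine there, via $(a_0,b_0)$.)

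So for $d=1$ one has to assume that $G_0$ is a finite union of intervals, the analogue of the $C^{1,1}$ hypothesis. The paper tacitly does this: it omits the $d=1$ proof, and its example has $G_0=(\frac13,\frac23)$. Under that assumption your componentwise bound, using $\tau_I\le\tau_{G_0}$ for each component $I$, is correct.
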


\pf
The proof for the case of $d=1$ is almost the same as for $d\geq 2$, so we omit the 
proof for $d=1$. 
The proof for $d \ge 2$ is similar in spirit to the proof of \cite[Theorem 3.3]
{PX23}, but the spectral heat content $Q_{G}^{(\alpha)}(t)$ for stable 
processes is super-additive instead of additive under disjoint union, one 
must make the necessary modifications. We write $Q_{G}^{(\alpha)}(t)$ as
\begin{align*}
&Q_{G}^{(\alpha)}(t)=\sum_{j=1}^{N}Q_{R_{j}G}^{(\alpha)}(t)+Q_{G_{0}}^{(\alpha)}
(t)+\left(Q_{G}^{(\alpha)}(t)-\sum_{j=1}^{N}
Q_{R_{j}G}^{(\alpha)}(t)-Q_{G_{0}}^{(\alpha)}(t)\right)\\
&= \sum_{j=1}^{N}Q_{R_{j}G}^{(\alpha)}(t)+Q_{G_{0}}^{(\alpha)}(t)+\mathcal{D}(t).
\end{align*}

It follows from Lemma \ref{lemma:scaling} that 
$$
Q_{G}^{(\alpha)}(t)=\sum_{j=1}^{N}r_{j}^{d}Q_{G}^{(\alpha)}(\frac{t}{r_{j}^{\alpha}}) + m(G_0)- (m(G_0)-Q_{G_{0}}^{(\alpha)}(t))+\mathcal{D}(t).
$$
We write $Q_{G}^{(\alpha)}(t)$ as $Q_{G}^{(\alpha)}(t)=m(G)-f(-\ln t)t^{\frac{d-\b}
{\alpha}}$ for some function $f(\cdot)$. Then, we can rewrite the equation above as 
\begin{align*}
&m(G)-f(-\ln t)t^{\frac{d-\b}{\alpha}}\\
=&\sum_{j=1}^{N}r_{j}^{d}\bigg(m(G)-f(-\ln\frac{t}{r_{j}^{\alpha}})\bigg(\frac{t}{r_{j}^{\alpha}}\bigg)^{\frac{d-\b}{\alpha}}\bigg) +m(G_0)- (m(G_0)-Q_{G_{0}}^{(\alpha)}(t))+\mathcal{D}(t).
\end{align*}
By the change of variable $z=-\ln t$ and (\ref{Eq:G-meas}), we arrive at the following renewal equation
\[
f(z)=\sum_{j=1}^{N}r_{j}^{\b}f(z-\ln\frac{1}{r_{j}^{\alpha}}) +\left(m(G_0)-
Q_{G_0}^{(\alpha)}(e^{-z})-\mathcal{D}(e^{-z})\right)e^{\frac{(d-\b)z}{\alpha}}.
\]
Let
\[
\mathcal{R}(e^{-z}) = \left(m(G_0)-Q_{G_0}^{(\alpha)}(e^{-z})-\mathcal{D}(e^{-z})\right)e^{\frac{(d-\b)z}{\alpha}}.
\]
We proceed to show that there exist constants $c_1$ and $c_2$ such that
\begin{equation}\label{Eq:R}
\mathcal{R}(e^{-z})\le c_1 e^{-c_2 |z|}, \ \ \hbox{ for } \ z \in \R.
\end{equation}

By repeating the same argument as \cite[Lemma 3.1]{PX23} using \cite[Theorem 1.1]{PS22} instead of \cite[Theorem 1.1]{PS19} we conclude that 
$$
0\leq \left(m(G_0)-Q_{G_0}^{(\alpha)}(e^{-z})\right)e^{\frac{(d-\b)z}{\alpha}} \leq ce^{-c|z|} \ \ \text{ for all }z\in \R,
$$
for some constant $c=c(\alpha,\b,d,m(G_0))>0$.
It remains to prove that $\mathcal{D}(e^{-z})e^{\frac{(d-\b)z}{\alpha}}$ decays exponentially as $z\to\infty$, so that one can apply the Renewal Theorem, Theorem \ref{thm:RT}.
By Lemma \ref{lemma:super additivity}, we clearly have $\mathcal{D}(t)\geq 0$. Note 
that 
\begin{align*}
&\mathcal{D}(t)=Q_{G}^{(\alpha)}(t)-\sum_{j=1}^{N}Q_{R_{j}G}^{(\alpha)}(t)-
Q_{G_{0}}^{(\alpha)}(t)\\
=&\int_{\bigcup_{j=1}^{N}R_{j}G\cup G_{0}}\P_{x}(\tau_{G}^{(\alpha)}>t)dx-\sum_{j=1}^{N}\int_{R_{j}G}\P_{x}(\tau_{R_{j}G}>t)dx-\int_{G_{0}}\P_{x}(\tau_{G_0}>t)dx\\
=&\sum_{j=1}^{N}\int_{R_{j}G}\left(\P_{x}(\tau_{G}>t)-\P_{x}(\tau_{R_{j}G}>t)\right)dx+\int_{G_0}\left(\P_{x}(\tau_{G}>t)-\P_{x}(\tau_{G_0}>t)\right)dx\\
=&\sum_{j=1}^{N}\int_{R_{j}G}\P_{x}(\tau_{R_{j}G}\leq t< \tau_{G})dx+\int_{G_0}\P_{x}(\tau_{G_0}\leq t<\tau_{G})dx.
\end{align*}
It follows from Lemma \ref{lemma:error high dim} that $\mathcal{D}(t)\leq cE(t)$. This proves (\ref{Eq:R}). Hence, the conclusions (\ref{Eq:Th371}) and  (\ref{Eq:Th372}) follow from the Renewal Theorem 2.1 immediately. 
\qed

Next, we investigate $Q_{D}^{(\alpha)}(t)$ when $\alpha\in (0,d-\b]$. In this case, 
we impose Condition \ref{cond:lower alpha}. 

\begin{thm}\label{thm:lower alpha}
Let $G$ be an open set given as in \eqref{eqn:SS}, $\alpha\in(0,d-\b]$, and Condition \ref{cond:lower alpha} is satisfied. Then, 
\begin{equation}\label{Eq:Per}
\lim_{t\to 0}\frac{m(G)-Q_{G}^{(\alpha)}(t)}{t}=\textup{Per}^{(\alpha)}(G)=\int_{G}\int_{G^c}\frac{c(\alpha, d)}{|x-y|^{d+\alpha}}dydx,
\end{equation}
where the constant $c(d,\alpha) $ is given by 
\begin{equation}\label{Eq:Cd}
c(\alpha, d) = \frac{\alpha \Gamma\big(\frac{d +\alpha} 2\big)} {2^{1 - \alpha} \pi^{d/2}\Gamma\big(1 - \frac \alpha 2\big)}.
\end{equation}
\end{thm}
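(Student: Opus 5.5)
The plan is to reduce the statement to a known result: for any open set $D$ of finite Lebesgue measure whose $\alpha$-perimeter is finite,
\[
\lim_{t\to 0}\frac{m(D)-Q^{(\alpha)}_D(t)}{t}=\textup{Per}^{(\alpha)}(D).
\]
This is the small-time asymptotics of the spectral heat content in the finite-perimeter regime, in the spirit of \cite{GPS19} and \cite{PS22}. Its heuristic is that for small $t$ the process exits $D$ essentially only through a single jump landing in $D^c$. The probability of such an exit, integrated over $D$, is $t\int_D\int_{D^c}J(y-x)\,dy\,dx+o(t)$. So the whole proof reduces to showing that $\textup{Per}^{(\alpha)}(G)<\infty$ when $\alpha\le d-\b$ under Condition \ref{cond:lower alpha}.

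\textbf{Step 1 (lower bound for the loss).} Write $m(G)-Q^{(\alpha)}_G(t)=\int_G\P_x(\tau_G\le t)\,dx$ and bound it below by
\[
\int_G\P_x(X_t\in G^c)\,dx=\int_G\int_{G^c}p_t(x,y)\,dy\,dx.
\]
Since $p_t(x,y)/t\to J(y-x)$ for $x\neq y$, Fatou's lemma gives $\liminf\ge \textup{Per}^{(\alpha)}(G)$ with no assumptions at all. Hence the statement is meaningful only if the perimeter is finite.

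\textbf{Step 2 (upper bound).} Use $\P_x(\tau_G\le t)\le \P_x(X_t\notin G)+\P_x(\tau_G\le t,\,X_t\in G)$. The first term, divided by $t$, converges to $\textup{Per}^{(\alpha)}(G)$ by dominated convergence, using the uniform bound $p_t(x,y)\le c\,t|x-y|^{-d-\alpha}$ together with finiteness of the perimeter. For the second term, the strong Markov property at $\tau_G$ and symmetry give
\[
\int_G\P_x(\tau_G\le t,\,X_t\in G)\,dx\le\int_G\int_{G^c}\sup_{s\le t}\P_y(X_{s}\in G)\,\mu(x,dy)\dots
\]
Rather than handle this directly, I would follow the standard argument of \cite{GPS19,PS22}. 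Polarity of $\partial G$ enters here: since $\alpha\le d-\b\le$ codimension and $\H^\b(\partial G)<\infty$-type smallness holds, $X$ a.s. exits $G$ by a jump into $\overline{G}^c$ and not by creeping. Then $m(G)-Q_G^{(\alpha)}(t)\le \int_G\int_0^t\int_{G^c}p^G_s(x,z)J(z-\cdot)\dots$, which by the Ikeda–Watanabe formula equals $\int_0^t\int_G\P_x(\tau_G>s)\,\kappa_G(x)\dots$, where $\kappa_G(x)=\int_{G^c}J(y-x)dy$. Precisely,
\[
\int_G\P_x(\tau_G\le t)\,dx=\int_0^t\int_G P^G_s\kappa_G(x)\,dx\,ds\le t\int_G\kappa_G=t\,\textup{Per}^{(\alpha)}(G).
\]
This holds because $P^G_s$ is sub-Markov and symmetric, provided exits occur only by jumps, i.e.\ $\P_x(X_{\tau_G}\in\partial G)=0$. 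Combining with Step 1 yields the limit.

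\textbf{Step 3 (the geometric input, main obstacle).} Two facts must be shown. First, $\textup{Per}^{(\alpha)}(G)<\infty$. Second, the boundary is not hit at exit. For the second, Condition \ref{cond:lower alpha} gives $\dim_\H\partial G=\b\le d-\alpha$. Sets of Hausdorff dimension $<d-\alpha$ are polar for $X$. In the critical case $\b=d-\alpha$, I would use that $\partial G$ is self-similar-like with $\H^\b(\partial G)<\infty$, so it has zero $\alpha$-Riesz capacity, since finite $\H^{d-\alpha}$ measure implies zero capacity. For the first fact, $\int_G\kappa_G\le c\int_G\delta_G(x)^{-\alpha}dx$. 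When $\alpha<d-\b$, using $m(\{x\in G:\delta_G(x)<s\})\le cs^{d-\b}$ from the Minkowski content, this integral is finite via the layer-cake formula. At $\alpha=d-\b$ this crude bound diverges logarithmically. The hardest step is therefore to avoid $\delta_G^{-\alpha}$ and instead estimate $\textup{Per}^{(\alpha)}(G)$ through the self-similar decomposition \eqref{eqn:SS}. Write
\[
\textup{Per}^{(\alpha)}(G)\le\sum_{j}r_j^{d-\alpha}\textup{Per}^{(\alpha)}(G)+\textup{Per}^{(\alpha)}(G_0)+\text{(cross terms)}.
\]
Since $\sum_j r_j^{d-\alpha}\le\sum r_j^{\b}=1$, equality is borderline, so I would instead exploit that $\kappa_G(x)$ only sees the genuinely exterior set $G^c$. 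That set has measure zero near the fractal part, because $m(\partial G)=0$, and the dense structure of $G$ near $\partial G$ lets me bound $\kappa_G$ on each scaled copy by pieces comparable to $\textup{Per}^{(\alpha)}$ of the outer boundary. I expect this critical case to be the main difficulty.
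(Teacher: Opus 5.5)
This is essentially the paper's proof. The paper gets polarity of $\partial G$ from ${\rm Cap}_{d-\alpha}(\partial G)=0$ via Frostman's theorem and then cites \cite[Theorem 3.4]{GPS19}; at $\alpha=d-\b$ that step, like your Step 3, really needs $\H^{\b}(\partial G)<\infty$ rather than the positivity written in Condition \ref{cond:lower alpha}. Your Steps 1--2 re-derive the content of that cited theorem from Fatou's lemma and the Ikeda--Watanabe identity $\int_G\P_x(\tau_G\le t)\,dx=\int_0^t\int_G\P_z(\tau_G>s)\,\kappa_G(z)\,dz\,ds$, where $\kappa_G(z)=\int_{G^c}J(y-z)\,dy$.

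Two corrections to your framing, though. First, the operative hypothesis is polarity, not $\textup{Per}^{(\alpha)}(G)<\infty$. Your opening ``known result'' for arbitrary open sets of finite perimeter is false, e.g.\ for the Cantor complement $\mathfrak{D}$ with $\alpha\in(1-\frac{\ln 2}{\ln 3},1)$; see Remark \ref{remark:mistake}. Second, once $\partial G$ is polar, your two bounds already give $\liminf_{t\to0}\frac{m(G)-Q^{(\alpha)}_G(t)}{t}\ge\textup{Per}^{(\alpha)}(G)\ge\limsup_{t\to0}\frac{m(G)-Q^{(\alpha)}_G(t)}{t}$ in $[0,\infty]$. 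So the critical-case perimeter estimate you single out as the main obstacle is unnecessary and can be deleted.
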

\pf
By Frostman's theorem (cf. \cite[Page 133]{Ka85} or \cite{Fal03}) and 
Remark 3 in \cite[Page 134]{Ka85}, we derive that for every $\alpha\in (0,d-\b]$, 
we have  ${\rm Cap}_{d-\alpha}( \partial G) = 0$, where ${\rm Cap}_{d-\alpha}$ 
denotes the $(d-\alpha)$-dimensional Riesz-Bessel capacity. Hence, it 
follows from \cite[Lemma 7]{T66} that $\partial G$ is polar for $X$. 
Now the conclusion follows immediately from \cite[Theorem 3.4]{GPS19}.
\qed

\begin{remark}\label{remark:mistake}
\rm{
We remark that there is a minor error in \cite[Corollary 3.7]{GPS19}. There, the authors claimed that $\lim_{t\to0}\frac{|D|-Q_{D}^{X}(t)}{t}
=\textup{Per}^{X}(D)$ provided the underlying L\'evy process $X$ has bounded variation and $D$ is any open set with finite Lebesgue measure. 
An $\alpha$-stable processes has bounded variation if and only if $\alpha\in(0,1)$ and $\textup{Per}^{(\alpha)}(0,1)
=\textup{Per}^{(\alpha)}(\mathfrak{D})$, where $\mathfrak{D}$ is the complement of the ternary Cantor set since $m((0,1)\setminus \mathfrak{D})=0$. This violates Theorem \ref{thm:higher alpha} when
$\alpha\in (d-\b,1)$. The authors in \cite{GPS19} claimed the boundary of any open set in $\R$ is polar for the process, as a single point is polar. However, the number of boundary points of an open set in $\R$ can be uncountable, such as $\mathfrak{D}$, and it can be non-polar for $X$. 
}
\end{remark}

For comparison purpose, we now consider the \textit{regular heat content} (RHC) for $X$ on a Borel set $D \subset \R^d$, which is defined as 
\[
H_{D}^{(\alpha)}(t)=\int_{D}\P_{x}(X_{t} \in D)\,dx.
\]

For any Borel sets $D \subset \Omega \subset \R^d$ such that $m( 
\Omega\backslash D) = 0$, where $m$ is the Lebesgue measure in $\R^d$,
the following lemma shows that the regular heat content on $D$ is equal to 
that of $\Omega$.
\begin{lemma}\label{lemma:same1}
Let $D \subset \Omega$ be Borel sets in $\R^d$ such that 
$m(\Omega\backslash D) = 0$ and $\alpha\in (0,2)$. Then for all $t > 0$,
$$
H_{D}^{(\alpha)}(t)=H_{\Omega}^{(\alpha)}(t).
$$
\end{lemma}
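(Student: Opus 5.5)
The plan is to use the fact that for each fixed $t>0$ the law of $X_t$ under $\P_x$ is absolutely continuous with respect to Lebesgue measure. Indeed, $X_t$ has a continuous transition density $p_t(x,y)=p_t(y-x)$, which is the inverse Fourier transform of the integrable function $e^{-t|\xi|^{\alpha}}$. So we write
\[
H_{D}^{(\alpha)}(t)=\int_{D}\int_{D}p_t(y-x)\,dy\,dx,\qquad H_{\Omega}^{(\alpha)}(t)=\int_{\Omega}\int_{\Omega}p_t(y-x)\,dy\,dx,
\]
with both integrands nonnegative, so Tonelli's theorem applies. If $m(\Omega)=\infty$, both quantities may be infinite. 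The argument below works with nonnegative quantities and does not need finiteness.

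Next we split $\Omega=D\cup(\Omega\setminus D)$ as a disjoint union in each variable:
\[
H_{\Omega}^{(\alpha)}(t)=H_{D}^{(\alpha)}(t)+\int_{D}\P_x(X_t\in \Omega\setminus D)\,dx+\int_{\Omega\setminus D}\P_x(X_t\in\Omega)\,dx .
\]
The last integral vanishes because its domain of integration $\Omega\setminus D$ is Lebesgue-null. For the middle term, for every $x$ we have $\P_x(X_t\in\Omega\setminus D)=\int_{\Omega\setminus D}p_t(y-x)\,dy=0$, again because $m(\Omega\setminus D)=0$. Therefore $H_{\Omega}^{(\alpha)}(t)=H_{D}^{(\alpha)}(t)$ for every $t>0$.

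There is no real obstacle. The only point to justify is the existence of the transition density at each fixed time $t>0$ (and hence absolute continuity of the law of $X_t$), which follows from the integrability of the characteristic function $e^{-t|\xi|^\alpha}$. Alternatively, it follows from the subordination representation $X_t=W_{S_t^{(\alpha/2)}}$ with $S_t^{(\alpha/2)}>0$ almost surely, since Brownian motion has a Gaussian density at every positive time. This contrasts with the exit-time functional in $Q^{(\alpha)}$, where the whole path up to time $t$ matters rather than only its position at time $t$.
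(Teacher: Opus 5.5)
Your proposal is correct and follows essentially the same route as the paper. Both arguments use the integrable characteristic function $e^{-t|\xi|^{\alpha}}$ to obtain a transition density, and then discard the null set $\Omega\setminus D$ twice: once as the outer domain of integration, and once inside $\P_x(X_t\in\cdot)$. Your remark that the argument works in $[0,\infty]$ without assuming $m(\Omega)<\infty$ is a harmless extra that the paper leaves implicit.
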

\pf
Note that $\E[e^{i\xi X_{t}}]=e^{-t|\xi|^{\alpha}}\in L^{1}(\R)$ and this 
shows that $X$ has a continuous and bounded transition density $p^{(\alpha)}
(t,x)$ (heat kernel). We write $p^{(\alpha)}(t,x,y):=p^{(\alpha)}(t,y-x)$. 
Then
\begin{equation}\label{Eq:RHC1}
\begin{split}
H_{\Omega}^{(\alpha)}(t)&=\int_{\Omega}\P_{x}(X_{t}\in \Omega)dx
= \int_{D}\P_{x}(X_{t}\in \Omega)dx 
\end{split}
\end{equation}
because $m(\Omega\backslash D) = 0$. On the other hand, this last assumption 
also implies
\begin{equation}\label{Eq:RHC2}
\P_{x}(X_{t}\in \Omega )=\P_{x}(X_{t}\in D)+\P_{x}(X_{t}\in \Omega \backslash D)
=\P_{x}(X_{t}\in D).
\end{equation}
Combining (\ref{Eq:RHC1}) and  (\ref{Eq:RHC2}) yields
\begin{align*}
&H_{\Omega}^{(\alpha)}(t)=\int_{D}\P_{x}(X_{t}\in D)\,dx
= H_{D}^{(\alpha)}(t).
\end{align*}
This proves the lemma.
\qed

The small-time behavior of RHC $H_{\Omega}^{(\alpha)}(t)$ for a $C^{1,1}$ open set $\Omega$ was studied by Acu\~{n}a Valverde \cite{Val2017}. 
In particular, Theorems 1.1 and 1.2 in \cite{Val2017} showed that, for $d = 1$ and $\Omega = (a, b)$, and for $d\ge 2$ and $\Omega$ being a bounded $C^{1,1}$ open set, the following statements hold: 
\begin{itemize}   
\item[(i)] For $1 < \alpha< 2$,  
\begin{equation}\label{Eq:Th12a}
\lim_{t \to 0}\frac{m(\Omega) - H_{\Omega}^{(\alpha)}(t)}{t^{1/\alpha}} = 
\frac 1 \pi \Gamma\Big(1 - \frac 1 \alpha\Big) {\cal H}^{d-1}(\partial \Omega)
\end{equation}
\item [(ii)] For $\alpha = 1$,
\begin{equation}\label{Eq:Th12b}
\lim_{t \to 0}\frac{m(\Omega) - H_{\Omega}^{(1)}(t)}{t \log (1/t)} = 
\frac 1 \pi   {\cal H}^{d-1}(\partial \Omega).
\end{equation}
\item [(iii)] For $0 < \alpha< 1$, 
\begin{equation}\label{Eq:Th12c}
\lim_{t \to 0}\frac{m(\Omega) - H_{\Omega}^{(\alpha)}(t)}{t} 
= \textup{Per}^{(\alpha)}(\Omega).
\end{equation}
\end{itemize}
In the above, ${\cal H}^{0}(\partial \Omega) = \#(\partial \Omega)$ which is the cardinality of $\partial \Omega$, and  $\textup{Per}^{(\alpha)}(\Omega)$ is defined as in \eqref{Eq:Per}.
We remark that \eqref{Eq:Th12a} and \eqref{Eq:Th12b} follow from \cite[Theorems 1.1]{Val2017} directly and \eqref{Eq:Th12c} follows from \cite[Theorem 3.2]{GPS19}.

By combining \eqref{Eq:Th12a}-\eqref{Eq:Th12c} with Lemma \ref{lemma:same1}, we obtain the following corollary of Theorems 1.1 and 1.2 in \cite{Val2017} that is applicable to fractal drums. 
\begin{corollary}\label{Cor:RHC}
Let $\Omega = (a, b)$ if $d = 1$ and $\Omega \subset \R^d$ be a bounded $C^{1,1}$ open set if $d\ge 2$. Then for any fractal drum $G\subset \Omega$ that satisfies $m(\Omega\backslash G) = 0$, \eqref{Eq:Th12a}, \eqref{Eq:Th12b}, and \eqref{Eq:Th12c} hold with $\Omega$ replaced by $G$.
\end{corollary}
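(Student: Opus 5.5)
The corollary follows by combining Lemma \ref{lemma:same1} with the known asymptotics \eqref{Eq:Th12a}--\eqref{Eq:Th12c} for $\Omega$. First, since $G\subset\Omega$ and $m(\Omega\setminus G)=0$, we have $m(G)=m(\Omega)$. Both $G$ and $\Omega$ are open, hence Borel, so Lemma \ref{lemma:same1} applies with $D=G$ and gives $H_G^{(\alpha)}(t)=H_\Omega^{(\alpha)}(t)$ for every $t>0$. Consequently
\[
m(G)-H_G^{(\alpha)}(t)=m(\Omega)-H_\Omega^{(\alpha)}(t),\qquad t>0.
\]
Dividing by $t^{1/\alpha}$, $t\log(1/t)$ or $t$, according to whether $\alpha\in(1,2)$, $\alpha=1$ or $\alpha\in(0,1)$, the three limits for $G$ coincide with those for $\Omega$. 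The latter are given by \eqref{Eq:Th12a}--\eqref{Eq:Th12c}, which hold because $\Omega$ is an interval when $d=1$ and a bounded $C^{1,1}$ open set when $d\ge2$, as required in \cite{Val2017} and \cite{GPS19}.

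The one point needing care is the meaning of ``with $\Omega$ replaced by $G$'' on the right-hand sides. I read the statement as replacing $\Omega$ by $G$ only in $m(\Omega)$ and $H^{(\alpha)}_\Omega$, while the constants remain the geometric quantities of $\Omega$: $\mathcal H^{d-1}(\partial\Omega)$ in \eqref{Eq:Th12a} and \eqref{Eq:Th12b}. This reading matters because $\partial G$ is a fractal, so $\mathcal H^{d-1}(\partial G)$ is in general infinite or meaningless; the constant must stay $\mathcal H^{d-1}(\partial\Omega)$.

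For \eqref{Eq:Th12c} a literal replacement is also valid. Since $m(\Omega\setminus G)=0$, the sets $G$ and $\Omega$ differ by a null set, and so do $G^c$ and $\Omega^c$. The integrand $|x-y|^{-d-\alpha}$ is integrated against Lebesgue measure, so
\[
\textup{Per}^{(\alpha)}(G)=\textup{Per}^{(\alpha)}(\Omega).
\]
Thus \eqref{Eq:Th12c} holds for $G$ with its own fractional perimeter.

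There is no real obstacle in this argument. The only things to check are that Lemma \ref{lemma:same1} applies (which is immediate) and that the constants are read as explained above.
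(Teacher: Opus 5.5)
Your argument is correct and is essentially the paper's: Lemma \ref{lemma:same1} gives $H_G^{(\alpha)}(t)=H_\Omega^{(\alpha)}(t)$, and $m(G)=m(\Omega)$, so the three limits transfer verbatim from $\Omega$. Your reading of the constants, with $\mathcal{H}^{d-1}(\partial\Omega)$ kept and $\textup{Per}^{(\alpha)}(G)=\textup{Per}^{(\alpha)}(\Omega)$, matches how the paper uses the corollary in Example \ref{ex:same}.
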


Theorem \ref{thm:higher alpha} and Corollary \ref{Cor:RHC} show that, when $\alpha \in (d-\mathfrak{b},2)$, the spectral heat content and the regular heat content on fractal drums decay at different rates. In this regime, the spectral heat content decays faster than the regular heat content because of the additional cooling effect along the fractal boundary of $D$. 
This behavior is very different from the corresponding results for regular and spectral heat contents on domains with smooth boundary \cite{Val2016, Val2017, LP25, PS22}, where the regular and spectral heat contents have the same decay rate but different leading coefficients.
In contrast, when $\alpha \in (0,d-\mathfrak{b}]$, Theorem \ref{thm:lower alpha} and 
\eqref{Eq:Th12c} show that the spectral heat content and the regular heat content have not only the same decay rate, but also the same leading coefficients. This is because, in this range  $\alpha \in (0,d-\mathfrak{b}]$, the fractal boundary of $D$ is invisible to the stable process.

\section{Case Studies: the Complement of the Ternary Cantor Set and the modified Sierpi\'nski Gasket}\label{example}

In this section, we provide two examples where Theorems \ref{thm:higher alpha} and \ref{thm:lower alpha} can be applied to find the small-time asymptotic behavior of the spectral heat content. 
Let $\mathfrak{D}$ be the complement of the ternary Cantor set in $[0,1]$ and $\mathfrak{C}$ be the ternary Cantor set so that
\[
\mathfrak{C}\cup \mathfrak{D}=[0,1] \text{ and }  \mathfrak{C} \cap\mathfrak{D}=\emptyset.
\]
Note that $\mathfrak{D}$ can be decomposed as 
\[
\mathfrak{D}= R_1\mathfrak{D}\cup R_2\mathfrak{D} \cup \mathfrak{D}_0,
\]
where $R_1$ and $R_2$ are two similitudes on $\R$ defined by $R_1(x) = \frac{x}{3}$  and  $R_2(x) = \frac{x}{3} + \frac23$ and $\mathfrak{D_0} = (\frac13, \frac23)$. 
It is well-known that the interior Minkowski dimension and the Hausdorff dimension of $\mathfrak{C}$ coincide and are equal to $\frac{\log 2}{\log 3}$.
We find the small-time asymptotic behavior of the regular heat content as well as the spectral heat content on $\mathfrak{D}$ and show that their behaviors are quite different. 

In the following, the case for $\alpha \in (1-\frac{\ln 2}{\ln 3},2)$ follows from Theorem \ref{thm:higher alpha} with $r_{1}=r_{2}=\frac13$, 
$\b=\frac{\ln 2}{\ln 3}$, and $\rho=1$; and the case for $\alpha \in (0,1-\frac{\ln 2}{\ln 3}]$ follows from Theorem \ref{thm:lower alpha}.

\begin{example}\label{ex:TC}
Let $\mathfrak{D}$ be the complement of the ternary Cantor set in $[0,1]$. Then,
\begin{enumerate}
\item For $\alpha \in (1-\frac{\ln 2}{\ln 3},2)$, 
$$
Q_{\mathfrak{D}}^{(\alpha)}(t)=m(\mathfrak{D})-F(-\ln t)t^{\frac{1-\frac{\ln2}{\ln 3}}
{\alpha}} +o(t^{\frac{1-\frac{\ln 2}{\ln 3}}{\alpha}}) \text{ as } t\downarrow0,
$$
where $F(z)=\frac{1}{\ln 3}\sum_{n=-\infty}^{\infty}\tilde{R}(z-n\alpha)$ with $\tilde{R}(t)=\left(|\mathfrak{D_0}|-Q_{\mathfrak{D_0}}^{(\alpha)}(t)-\tilde{A}(t)\right)t^{-\frac{1-\frac{\ln 2}
{\ln 3}}{\alpha}}$ and $\tilde{A}(t)=Q_{\mathfrak{D}}^{(\alpha)}(t)-\sum_{j=1}^{2}Q_{R_{j}\mathfrak{D}}^{(\alpha)}(t)-Q_{\mathfrak{D_0}}^{(\alpha)}(t)$.

\item For $\alpha\in (0,1-\frac{\ln 2}{\ln 3}]$,
$$
Q_{\mathfrak{D}}^{(\alpha)}(t)=m(\mathfrak{D})-\textup{Per}^{(\alpha)}(\mathfrak{D})t +o(t), 
$$
where $\textup{Per}^{(\alpha)}(\mathfrak{D})=\int_{\mathfrak{D}}\int_{\mathfrak{D}^{c}}\frac{c(1,\alpha)}{|x-y|^{1+\alpha}}dydx$.
\end{enumerate}
\end{example}

We turn our attention to the regular heat content for $X$ on $\mathfrak{D}$.
Recall from Lemma \ref{lemma:error high dim} that $E(t)=t^{1/\alpha}, t\ln(1/t)$, or $t$ for $\alpha\in (1,2)$, $\alpha=1$, or $\alpha\in (0,1)$, respectively. 
From  \eqref{Eq:Th12a}, \eqref{Eq:Th12b}, \eqref{Eq:Th12c}, and Corollary \ref{Cor:RHC}, we obtain the following result.

\begin{example}\label{ex:same}
Let $\mathfrak{D}$ be the complement of the ternary Cantor set in $[0,1]$. Then, 
$$
\lim_{t\to 0}\frac{m(\mathfrak{D})-H_{\mathfrak{D}}^{(\alpha)}(t)}{E(t)}=\lim_{t\to 0}\frac{m((0,1))-H_{(0,1)}^{(\alpha)}(t)}{E(t)}=
\begin{cases}
\frac{2}{\pi}\Gamma(1-\frac{1}{\alpha})&\text{if } \alpha\in (1,2),\\
\frac{2}{\pi}&\text{if } \alpha=1,\\
\textup{Per}^{(\alpha)}(0,1), &\text{if } \alpha\in (0,1),
\end{cases}
$$
where $\textup{Per}^{(\alpha)}(0,1):=\int_{(0,1)}\int_{(0,1)^{c}}\frac{A_{\alpha,1}}{|x-y|^{1+\alpha}}dydx$,
and $A_{\alpha,d}
=\alpha2^{\alpha-1}\pi^{-1-d/2}\sin(\frac{\pi\alpha}{2})\Gamma(\frac{d+\alpha}{2})\Gamma(\frac{\alpha}{2})$.
\end{example}

\begin{remark}
We remark that Theorem \ref{thm:higher alpha} and Example \ref{ex:same} show 
that the spectral heat content for fractal drums is very different 
from the SHC for a smooth domain such as $(0,1)$. 
In \cite[Theorem 1.1]{Val2016} the author 
showed that for $\alpha \in (0,1]$ 
$$
\lim_{t\to 0}\frac{m((0,1))-H^{(\alpha)}_{(0,1)}(t)}{E(t)}=\lim_{t\to 0}\frac{m((0,1))-Q^{(\alpha)}_{(0,1)}(t)}{E(t)}.
$$
However, as one can see in Theorem \ref{thm:higher alpha} the order of decay for $Q_{\mathfrak{D}}^{(\alpha)}(t)$ is $t^{\frac{d-\b}{\alpha}}$ 
when $\alpha \in (d-\b,1]$. Hence, for $\alpha\in (d-\b,1]$ 
we conclude that $H_{\mathfrak{D}}^{(\alpha)}(t)$ and $Q_{\mathfrak{D}}^{(\alpha)}(t)$ have different decay rates and one cannot infer the small-time asymptotic behavior
 of $Q_{\mathfrak{D}}^{(\alpha)}(t)$ from $H_{\mathfrak{D}}^{(\alpha)}(t)$.
\end{remark}

\begin{remark}
\rm{
More general Cantor-type sets were introduced in \cite[Chapter 4]{M95}. 
Let $E_{i_1,i_2,\cdots, i_{k}}$, $i_{j}\in \{1,2,\cdots, N\}$ be compact sets in $\R^{d}$ with the following properties:
\begin{enumerate}
\item $E_{i_1,i_2,\cdots, i_{k}, i_{k+1}}\subset E_{i_1,i_2,\cdots, i_{k}}$ for all $k\in \N$. 
\item $\max_{i_1,\cdots, i_{k}}d(E_{i_1,i_2,\cdots, i_{k}})\to 0$ as $k\to \infty$, where $d(E)$ is the diameter of $E$. 
\item $\sum_{j=1}^{N}d(E_{i_1,\cdots, i_{k}, j})^{s}=d(E_{i_1,\cdots, i_k})$.
\item For any ball $B$, $\sum_{B\cap E_{i_1,i_2,\cdots, i_{k}} \neq\emptyset }d(E_{i_1,i_2,\cdots, i_{k}})^{s}\leq c d(B)^s$ for some constant $c>0$. 
\end{enumerate}
Then, it is shown in \cite[Chapter 4]{M95}
\[
0<\H^{s}\bigg(\bigcap_{k=1}^{\infty}\bigcup_{i_1,\cdots, i_k}E_{i_1,i_2,\cdots, i_{k}}\bigg)<\infty.
\]
}
\end{remark}

\begin{remark}
\rm{
For more general Cantor-type sets, Ohtsuka \cite[Page 151]{Oh57} provided necessary and sufficient conditions for their $\beta$-dimensional Riesz-Bessel capacity to be 0.
Hence, we can extend Example \ref{ex:TC} to these Cantor-type sets as well. 
}
\end{remark}

As we mentioned earlier, the complement of the Sierpi\'nski gasket satisfies (\ref{eqn:SS}). For the reader's convenience, we recall the definition of the Sierpi\'nski gasket and its properties.
Let $T_{0}$ be the closed equilateral triangle with vertices $(0,0), (1, 0)$, and $(\frac12, \frac{\sqrt{3}}{2})$. Let $R_{1}(x)=\frac{x}{2}$, $R_{2}(x)=\frac{x}{2}+(\frac12, 0)$, and $R_{3}(x)=\frac{x}{2}+(\frac14,\frac{\sqrt{3}}{4})$. For $i\in \{1,2,3\}$, we define $T_{i}=R_{i}(T_0)$; and $T_{i_1,\cdots, i_{k}, i}=R_{i}(T_{i_1,i_2,\cdots, i_{k}})$ for $(i_1,\cdots, i_{k}) \in \{1, 2, 3\}^k$. The Sierpi\'nski gasket $\mathfrak{G}$ is defined by
\[
\mathfrak{G}=\bigcap_{k=1}^{\infty}\bigcup_{i_1,\cdots, i_{k}}T_{i_1,\cdots, i_k}.
\]
It is well known (cf. \cite{Fal03}) that the Hausdorff dimension of $\mathfrak{G}$ is $\frac{\ln 3}{\ln 2}$ and 
\[
0<\H^{\frac{\ln 3}{\ln 2}}(\mathfrak{G})<\infty.
\]
Define $G=T_{0}\setminus \mathfrak{G}$. Then, $G$ can be written as 
\[
G=R_{1}G\cup R_{2}G\cup R_{3}G\cup V,
\]
where $V$ is the open triangle whose vertices are $(1/4, \sqrt{3}/4)$, $(1/2,0)$ and $(3/4,\sqrt{3}/4)$. Then, $G$ is a set of the form in \eqref{eqn:SS} with $\sum_{j=1}^{3}(\frac12)^{\frac{\ln 3}{\ln 2}}=1$ and $\partial G=\mathfrak{C}$.
It follows from \cite[Theorem A]{LV1996} that the interior Minkowski dimension of $\mathfrak{C}$ is also $\frac{\ln 3}{\ln 2}$.

Hence, $G$ satisfies Condition \ref{cond:higher alpha} except that the triangle $V$ is not $C^{1,1}$. The asymptotic properties of the 
spectral heat content for $X$ on triangles, or more generally, 
domains with polygonal boundaries, have not been established at this point. Therefore, Theorem \ref{thm:higher alpha} cannot be directly applied to $G$.

In the following, we construct an open set $G'$, which is called the complement of a smoothed Sierpi\'nski gasket, such that 
Theorem \ref{thm:higher alpha} can be applied. This is done by modifying the construction of the Sierpi\'nski gasket by smoothing out the corners of the initially 
removed triangle [i.e., $V$], so that it becomes a $C^{1,1}$ open set. 
We illustrate the precise construction of the modified Sierpi\'nski gasket $\mathfrak{G'}$.
Define
\[
T_1'=\bigcup_{j=1}^{3}R_{j}T_0\cup C,
\]
where $C$ is a union of three compact regions near the three corners of the triangle $V$, so that $T_0\setminus T_1'$ is a $C^{1,1}$ open set.
Define $T'_{1,i_2,\cdots, i_{k}, i}=R_{i}(T'_{1,i_2,\cdots, i_{k}})$ for $i\in \{1,2,3\}$ and $k\geq 2$.
Now we define the modified Sierpi\'nski gasket $\mathfrak{G'}$ as
\[
\mathfrak{G'}=\bigcap_{k=2}^{\infty}\bigcup_{i_2,\cdots, i_{k}}T'_{1,i_2,\cdots, i_k}.
\]
Finally, we define $G'=T_0\setminus \mathfrak{G'}$.  Note that $\mathfrak{G'}$ and $G'$ satisfy
\beq\label{eqn:inhomo}
\mathfrak{G'}=\bigcup_{j=1}^{3}R_{j}\mathfrak{G'}\cup C,
\eeq
and
\[
G'=\bigcup_{j=1}^{3}R_{j}G'\cup (T_0\setminus T'_1).
\]
Note that sets of the form \eqref{eqn:inhomo} are called the inhomogeneous self-similar set with condensation $C$ in \cite{Barnsley06}.
The set $G'$ can be written as in \eqref{eqn:SS} with $G_0 = T_0\setminus T'_1$ being $C^{1,1}$. Hence, Theorem \ref{thm:higher alpha} holds for $G'$.

Although we believe that Theorem 3.7 should also hold for the complement of the Sierpi\'nski gasket $\mathfrak{G}$ and many other fractal drums such 
as the von Koch snowflake, we currently lack a method to prove it. We therefore state the following open question for future research.
\\

\noindent\textbf{Open Question}: Does the small-time asymptotic behavior as in Theorem \ref{thm:higher alpha} hold for the complement of  
Sierpi\'nski gasket $\mathfrak{G}$ and the von Koch snowflake?

\vspace{0.3in}

\noindent\textbf{\LARGE{Declarations}}
\vspace{0.3in}

\noindent\textbf{\large{Ethical Approval}}
Not applicable
\vspace{0.1in}

\noindent\textbf{\large{Funding}}
Hyunchul Park's research was supported in part by the AMS-Simons Research Enhancement Grants for Primarily Undergraduate Institution (PUI) 
Faculty 2025-2028 and the Research and Creative Projects Awards 2025-2026 from SUNY New Paltz.
Yimin Xiao's research was supported in part by the NSF grant DMS-2153846.
\vspace{0.1in}

\noindent\textbf{\large{Availability of data and materials }}
No datasets were generated or analyzed during the current study.
\vspace{0.1in}

\begin{singlespace}

\end{singlespace}
\end{doublespace}

\vskip 0.3truein

\noindent {\bf Hyunchul Park}

\noindent Department of Mathematics, State University of New York at New Paltz, NY 12561, USA

\noindent E-mail: \texttt{parkh@newpaltz.edu}

\bigskip

\noindent{\bf Yimin Xiao}

\noindent Department of Statistics and Probability, Michigan State University, East Lansing, MI 48824, USA

\noindent E-mail: \texttt{xiaoyimi@stt.msu.edu}

\end{document}